\providecommand{\U}[1]{\protect\rule{.1in}{.1in}}
\newtheorem{theorem}{Theorem}
\newtheorem{prop}{Proposition}
\newtheorem{con}{Conjecture}
\newtheorem{ejem}{Example}
\newtheorem{lem}{Lemma}
\newtheorem*{theorem*}{Theorem}
\newtheorem*{prop*}{Proposition}
\newtheorem*{con*}{Conjecture}
\newtheorem*{ejem*}{Example}
\theoremstyle{definition}
\newtheorem{rem}{Remark}
\newcommand{\flr}[1]{\ensuremath{\lfloor #1 \rfloor}}
\newcommand{\pedro}[1]{\ifthenelse{\boolean{pedro}}{\color{black!40!red}\marginpar{$\star$ #1}\setboolean{pedro}{false}}{\color{black}\setboolean{pedro}{true}}}
\begin{document}
\author{L. Bayón}
\address{Departamento de Matemáticas, Universidad de Oviedo\\
Avda. Calvo Sotelo s/n, 33007 Oviedo, Spain}
\email{bayon@uniovi.es}
\author{P. Fortuny Ayuso}
\address{Departamento de Matemáticas, Universidad de Oviedo\\
Avda. Calvo Sotelo s/n, 33007 Oviedo, Spain}
\email{fortunypedro@uniovi.es}
\author{J. Grau}
\address{Departamento de Matemáticas, Universidad de Oviedo\\
Avda. Calvo Sotelo s/n, 33007 Oviedo, Spain}
\email{grau@uniovi.es}
\author{A. M. Oller-Marcén}
\address{Centro Universitario de la Defensa de Zaragoza - IUMA\\
Ctra. Huesca s/n, 50090 Zaragoza, Spain}
\email{oller@unizar.es}
\author{M. M. Ruiz}
\address{Departamento de Matemáticas, Universidad de Oviedo\\
Avda. Calvo Sotelo s/n, 33007 Oviedo, Spain}
\email{mruiz@uniovi.es}
\begin{abstract}
In this paper, we present a novel method for computing the asymptotic
values of both the optimal threshold, and the probability of success in sequences of optimal
stopping problems. This method, based on the resolution of a first-order
linear differential equation, makes it possible to systematically obtain these
values in many situations. As an example, we address nine variants of the well-known secretary problem, including the classical one, that appear in the literature on the subject, as well as
four other unpublished ones.
\end{abstract}

\title[A new method in optimal stopping problems]{A new method for computing asymptotic results in optimal stopping problems}

\maketitle
\keywords{Keywords: Optimal stopping problems, Threshold Strategy, Combinatorial Optimization, Secretary problem}

\section{Introduction. Optimal Stopping Problems.}
\label{intro}
An \emph{optimal stopping problem} is the task of trying to maximize a payoff function which depends on a sequence of random events by choosing the best moment at which to stop these (so that the payoff function only depends on the past events). The \emph{secretary problem} is perhaps the best-known instance, but examples in the literature are plentiful \cite{FER,gil,101}. These problems can be modeled by means of two finite sequences and a process. The first sequence, $\{X_i\}_{i=1}^n$, consists of random variables with known joint distribution; the other one, $\{P_i\}_{i=1}^n$, is made of functions of $i$ variables $P_{i}(x_1,\ldots,x_i)$, which depend on the observed values of $(X_1,\ldots, X_i)$ for each $i$. The process is stepwise:
\begin{enumerate}
\item At each  step $k$, the value $X_{k}=x_{k}$ is observed. Based on this value, the choice between stopping or continuing is made. If $k=n$, the process stops in any case.
\item The \emph{final payoff} $P_{k}(x_{1},\dots ,x_{k})$ is obtained after stopping.
\end{enumerate}
The objective in these situations is to maximize the expected final payoff which we will denote by $\mathbf{P}$.

If, for instance, $X_{k}\sim Be(p_{k})$ are   mutually independent Bernoulli random variables, and the payoff function
$P_{k}$ only depends on the last observation $x_{k}$ (and not on the previous ones) we can reason as follows: let $\gamma$ be the payoff obtained if the process ends after the $n$-th  step (without stopping), and let $E(k)$ be the expected payoff if the process is not stopped at step $k$ but the optimal strategy is followed from that point on. A recursive argument shows that
\begin{align*}
E(k)  &  =p_{k+1} \max\big\{P_{k+1}(1),E(k+1)\big\}+ (1-p_{k+1})\max
\big\{P_{k+1}(0),E({k+1})\big\},\\
E(n)  &  =\gamma.
\end{align*}
This dynamic program allows us to compute the expected payoff when following the optimal strategy (i.e. $E(0)$) in linear time $\mathcal{O}(n)$, even if we do not actually know which this optimal strategy is.

In the same setting, we can consider the (usual) situation in which the process is never optimal when $X_k=0$ (recall that $X_k$ are Bernoulli), or we are not allowed to stop in that event. This can be modeled setting $P_k(0)=-\infty$. With this condition, the previous recurrence becomes:
\begin{align*}
E(k)  &  =p_{k+1} \max\big\{P_{k+1}(1),E(k+1)\big\}+ (1-p_{k+1})E({k+1}),\\
E(n)  &  =\gamma.
\end{align*}
Defining the \emph{optimal stopping set} $\mathcal{O}$ as
\[
\mathcal{O}=\{k: P_{k}(1)\geq E(k)\},
\]
the optimal strategy in this case consists in stopping whenever $k\in\mathcal{O}$ and $X_k=1$. The expected payoff following this
strategy is precisely $E(0)$.

If the optimal stopping set turns out to be of the form $\mathcal{O}%
=\{\kappa+1,\dots,n\}$, then the number $\kappa$ is called the \emph{optimal stopping
  threshold} and the strategy that consists in stopping at step $\overline{k}$, with
$\overline{k}=\min\{k:k\in\mathcal{O},X_{k}=1\}$, is optimal. It is called the
\emph{optimal threshold strategy}.

Of course, in any problem, we may always decide to follow a threshold strategy using an arbitrary stopping threshold $k$ (not necessarily optimal). If we denote by $\overline
{E}(k)$ the expected payoff obtained when following such a strategy, a recursive argument shows that:
\begin{align*}
\overline{E}(k)  &  =p_{k+1}P_{k+1}(1)+ (1-p_{k+1})\overline
{E}({k+1}),\\
\overline{E}(n)  &  =\gamma.\nonumber
\end{align*}
Obviously, $\overline{E}(\kappa)=\mathbf{P}$ is the maximum expected payoff
using a threshold strategy.

Insofar we have assumed that the number $n$ of events is fixed, but it can be set as a parameter. For any $n$, we may
consider an optimal stopping problem defined by mutually independent Bernoulli random variables
$\{X^{(n)}_{i}\}_{i=1}^{n}$ and payoff functions $\{P_{i}^{(n)}\}_{i=1}^{n}$. Thus, if we assume that the optimal strategy for each $n$ is of the threshold type, we will have a sequence of recursive  functions $\{\overline{E}_n(k)\}_{n}$ representing, for each $n$, the expected payment using the threshold $k$:
\begin{align}
\label{rec}\overline{E}_n(k)  &  =p^{(n)}_{k+1}P^{(n)}_{k+1}(1)+ (1-p^{(n)}_{k+1})\overline
{E}_n({k+1}),\\
\overline{E}_n(n)  &  =\gamma_n.\nonumber
\end{align}
Each of these problems will have an optimal stopping threshold $\kappa_{n}$,
and an expected payoff $\overline{E}_{n}(\kappa_{n})=\mathbf{P}_{n}$ using the
corresponding optimal threshold strategy. A natural problem is thus to study the asymptotic behavior of these values as $n$ tends
to infinity. Specifically, computing $\lim_{n} \frac{\kappa_{n}}{n}$ and $\lim_{n}
\mathbf{P}_{n}$.

In many cases, the optimal stopping threshold
$\kappa_{n}$ happens to be asymptotically of the form $\kappa_{n}\sim
n\theta$ for some $\theta\in[0,1]$. Recall that $\kappa_{n}$ is, by  definition, the value for which the function $\overline{E}_{n}$ reaches its maximum.
The computation of $\theta=\lim_n \frac{\kappa_n}{n}$ can be achieved, under adequate conditions, by means of the following sequence of functions $f_n:[0,1]\rightarrow \mathbb{R}$
\begin{equation*}
  f_n(x) = \overline{E}_n(\lfloor nx \rfloor).
\end{equation*}
If, for example, 
$f_n(x)$ converges uniformly to a continuous function $f\in C[0,1]$ with a single global maximum $\theta$, then we have shown that \cite{mejorpeor}:
\begin{equation*}
  \lim_{n}\frac{\kappa_n}{n} = \theta, \,\,\, \lim_n \mathbf{P}_n = f(\theta).
\end{equation*}
The uniform convergence of the sequence $f_n(x)$ to a continuous function is an issue which can often be \emph{heuristically} ascertained, but whose proof needs not be straightforward at all. In \cite{yo} we were able to prove the following result, when $\overline{E}_n(k)$ satisfies a recurrence relation similar to the one above:

\begin{theorem}
\label{inicial} Consider a sequence of functions $F_{n}:[0,n]\cap
\mathbb{Z}\rightarrow\mathbb{R}$, each of which defined recursively by the
conditions:
\[
F_{n}(k)=G_{n}(k)+H_{n}(k)F_{n}(k+1)\text{ and }F_{n}(n)=\mu.
\]
Let $f_{n}(x):=F_{n}(\lfloor{nx}\rfloor)$, $h_{n}(x):=n(1-H_{n}(\lfloor
{nx}\rfloor))$ and $g_{n}(x):=nG_{n}(\lfloor{nx}\rfloor)$. If both $h_{n}(x)$
and $g_{n}(x)$ converge in $(0,1)$ and uniformly in $[\varepsilon
,\varepsilon^{\prime}]$ for all $0<\varepsilon<\varepsilon^{\prime}<1$ to
continuous functions in $(0,1)$, $h(x)$ and $g(x)$, respectively, and
$f_{n}(x)\rightarrow f(x)$ uniformly in $[0,1]$ with $f\in C[0,1]$, then
$f(1)=\mu$ and $f$ satisfies the following differential equation in $(0,1)$
\[
f^{\prime}(x)= f(x)h(x)-g(x).
\]
\end{theorem}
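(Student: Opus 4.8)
The plan is to recognize the recurrence as an Euler-type discretization of the stated differential equation and to pass to its \emph{integral} form, where only continuity of the limit $f$ (and not its differentiability) is needed. The boundary condition is immediate: since $f_n(1)=F_n(n)=\mu$ for every $n$ and $f_n\to f$ pointwise at $x=1$, we get $f(1)=\mu$. For the differential equation, I would first rewrite the recurrence as a first difference: from $F_n(k)=G_n(k)+H_n(k)F_n(k+1)$ one obtains $F_n(k+1)-F_n(k)=-G_n(k)+(1-H_n(k))F_n(k+1)$, and writing $x_k=k/n$ and substituting $G_n(k)=\frac1n g_n(x_k)$ and $1-H_n(k)=\frac1n h_n(x_k)$, this becomes
\[
f_n(x_{k+1})-f_n(x_k)=\frac1n\Big[-g_n(x_k)+h_n(x_k)f_n(x_{k+1})\Big],
\]
which is exactly a forward-difference scheme for $f'=fh-g$ with step $1/n$.

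Then, fixing $0<a<b<1$ and setting $k_a=\lfloor na\rfloor$, $k_b=\lfloor nb\rfloor$, I would telescope to obtain
\[
f_n(b)-f_n(a)=\frac1n\sum_{k=k_a}^{k_b-1}\Big[-g_n(x_k)+h_n(x_k)f_n(x_{k+1})\Big].
\]
The left-hand side tends to $f(b)-f(a)$. The right-hand side is a Riemann sum with mesh $1/n$, and I would show it converges to $\int_a^b[h(x)f(x)-g(x)]\,dx$. Since $[a,b]$ is a compact subinterval of $(0,1)$, there $h_n\to h$ and $g_n\to g$ uniformly, so these limits, and hence the $h_n,g_n$ for large $n$, are bounded on $[a,b]$; and $f_n\to f$ uniformly on $[0,1]$, so the $f_n$ are uniformly bounded too. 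Using the add-and-subtract split
\[
h_n(x_k)f_n(x_{k+1})-h(x_k)f(x_k)=[h_n(x_k)-h(x_k)]f_n(x_{k+1})+h(x_k)[f_n(x_{k+1})-f(x_{k+1})]+h(x_k)[f(x_{k+1})-f(x_k)],
\]
the three error terms are bounded uniformly in $k$ by constants times $\|h_n-h\|_{\infty,[a,b]}$, $\|f_n-f\|_\infty$, and the modulus of continuity $\omega_f(1/n)$ respectively (the last controlling the harmless shift from $x_{k+1}$ to $x_k$), each of which is $o(1)$. Hence $\frac1n\sum h_n(x_k)f_n(x_{k+1})\to\int_a^b hf$ and, likewise, $\frac1n\sum g_n(x_k)\to\int_a^b g$, which yields the integral equation
\[
f(b)-f(a)=\int_a^b\big[h(x)f(x)-g(x)\big]\,dx,\qquad 0<a<b<1.
\]

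Finally, because $h,g,f$ are continuous on $(0,1)$, the integrand $hf-g$ is continuous, so the fundamental theorem of calculus applied to this integral equation shows that $f$ is $C^1$ on $(0,1)$ with $f'(x)=f(x)h(x)-g(x)$, as claimed. The main obstacle is the Riemann-sum step: one must carefully treat the mismatched sample points ($h_n,g_n$ evaluated at $x_k$ but $f_n$ at $x_{k+1}$) and use that the uniform convergence of $h_n,g_n$ is available only on compact subintervals of $(0,1)$ — which is precisely why the conclusion is stated on the open interval and why the telescoping is carried out between interior points $a$ and $b$ rather than up to $0$ or $1$.
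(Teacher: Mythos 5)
Your proposal is correct, but note that the paper never proves Theorem \ref{inicial} at all: it is quoted as a known result from \cite{yo}, and the machinery the paper does develop (Theorem \ref{main} together with Lemmas \ref{lem1p} and \ref{imp}) runs in the \emph{opposite} direction --- it assumes a $C^1$ solution $f$ of the differential equation exists and then proves uniform convergence of $f_n$ to $f$, via Taylor's theorem applied to $f$ and a discrete comparison sequence $S_n(k)=F_n(k)-f(k/n)$ driven to zero by Lemma \ref{imp}. Your route is genuinely different and self-contained: you rewrite the recurrence as a forward difference, telescope between interior points $a<b$, pass to the limit in the Riemann sum to obtain the integral identity $f(b)-f(a)=\int_a^b\bigl(h(x)f(x)-g(x)\bigr)\,dx$, and only then invoke the fundamental theorem of calculus to upgrade $f$ to $C^1$. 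This has the advantage of requiring no a priori smoothness of $f$ (only the continuity guaranteed by the hypotheses), whereas the paper's Taylor-based technique needs $f\in C^2$ and is instead tailored to eliminating the uniform-convergence assumption, which is the paper's stated goal. You also correctly identify and handle the two delicate points: the sample-point mismatch ($h_n,g_n$ at $x_k$ versus $f_n$ at $x_{k+1}$) via the three-term splitting controlled by $\|h_n-h\|_{\infty,[a,b]}$, $\|f_n-f\|_{\infty}$ and the modulus of continuity of $f$, and the merely local uniform convergence of $h_n,g_n$ by confining the telescoping to $[a,b]\subset(0,1)$. One cosmetic remark: since $\lfloor na\rfloor/n\le a$, the leftmost sample points can fall slightly to the left of $a$, so the compact interval on which you invoke uniform convergence of $h_n,g_n$ should be taken a bit larger, say $[a/2,b]$; for large $n$ all sample points lie there, and nothing else in the argument changes.
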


This is in fact a very useful result \cite{nosotros3, yo}, but the important issue about the uniform convergence remains. The aim of this paper is to overcome this complication by showing how, under certain conditions on $F_n(k), G_n(k)$ and $H_n(k)$ in the theorem above, uniform convergence is guaranteed and the functions $f(x), g(x), h(x)$ satisfy the differential equation of the statement. We demonstrate the power of our result by revisiting a number of well-known
problems, as well as addressing some new ones, and applying it to them.

Solving differential equations in order to determine asymptotic values in optimal stopping problems and, more specifically, in variants of the secretary problem has numerous precedents \cite{coste1,arxi,Erik,Samuels3,Samuels1,lorenzen,mucci1,mucci2,Samuels2,Tamaki}. This is not surprising, given the relationship between difference and differential equations. The importance of the present work lies in providing a systematic methodology for all the variants of the secretary problem in the literature, and for optimal stopping problems of similar nature. Certainly, the technique is also applicable to a great variety of sequences of recurring functions.

Section 2 is dedicated  to the main result. The long Section 3 is devoted to applying our methodology to several variants of the secretary problem, all of them well-known, in a unified way: the original \emph{secretary problem}  \cite{gil,101}, the \emph{postdoc} variant \cite{mejorpeor,aesima,posdoc2021,posdoc}, the \emph{best-or-worst} version \cite{mejorpeor,nosotros2}, the \emph{secretary problem with uncertain employment}  \cite{refusal}, the \emph{secretary problem with interview cost} \cite{coste1}, the \emph{win-lose-or-draw marriage problem} \cite{ferguson}, the \emph{duration problem} \cite{FER2}, the \emph{multicriteria secretary problem} \cite{multiatri}, and the \emph{secretary problem with a random number of applicants} \cite{sonin,ras}. Section 4 also includes applications of the new methodology, but now to other problems created \emph{ad hoc} such as \emph{lotteries with increasing prize}, the \emph{secretary problem with wildcard}, the \emph{secretary problem with random interruption of the interviews}, and the \emph{secretary problem with penalty if the second best is selected}. Finally, in Section 5 we present and motivate two lines of continuation of this research: on one side, stopping problems whose optimal strategy involves several thresholds and, on the other, sequences of recurrent functions $F_n:\{0,...,n\} \longrightarrow \mathbb{R}$ for which the sequence $ f_n(x):= F(\lfloor nx\rfloor)$ does not converge uniformly in $[0,1]$, but does so punctually in $(0,1)$.

\section{The main result.}

This section is devoted to proving our main result. In forthcoming sections, we will use it to establish a novel methodology for determining the asymptotic
optimal threshold and the asymptotic probability of success in problems for
which the optimal strategy is a threshold strategy. As we already mentioned,
the underlying ideas were present in \cite{yo}. The following two technical lemmas are easy but helpful.

\begin{lem}
\label{lem1p} Let $f:[0,1]\longrightarrow\mathbb{R}$ be a continuous function and, for every $n$, let $\widetilde{f}_{n}(x)=\displaystyle f\left(
  \frac{\lfloor nx\rfloor}{n}\right)  :[0,1]\longrightarrow\mathbb{R}$. Then, the sequence of functions $\{\widetilde{f}_{n}\}$ converges uniformly to $f$
on $[0,1]$.
\end{lem}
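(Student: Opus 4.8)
The plan is to exploit the single feature that makes this statement work: the floor operation moves the argument by an amount that is small \emph{uniformly} in $x$. Concretely, from the defining inequalities $nx-1<\lfloor nx\rfloor\le nx$ I would divide by $n$ to obtain, for every $x\in[0,1]$ and every $n\ge1$, the bound
\[
0\le x-\frac{\lfloor nx\rfloor}{n}<\frac{1}{n}.
\]
The crucial point is that the right-hand side does not depend on $x$; the displacement between the argument $x$ of $f$ and the argument $\lfloor nx\rfloor/n$ of $\widetilde{f}_n$ is controlled by a single quantity tending to $0$. I would also note in passing that $\lfloor nx\rfloor/n\in[0,1]$ (at $x=1$ it equals $1$), so that $\widetilde{f}_n$ is genuinely well defined as a function into the domain of $f$.

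Next I would bring in compactness. Since $f$ is continuous on the compact interval $[0,1]$, the Heine--Cantor theorem guarantees that $f$ is in fact \emph{uniformly} continuous there. Thus, given $\varepsilon>0$, I can choose $\delta>0$ so that $|s-t|<\delta$ implies $|f(s)-f(t)|<\varepsilon$ for all $s,t\in[0,1]$. This is precisely the ingredient that upgrades the pointwise smallness of the displacement into a uniform estimate on the difference of values.

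The two observations combine immediately. Choosing $N$ with $1/N<\delta$, for every $n\ge N$ and every $x\in[0,1]$ the displacement bound gives $\bigl|x-\lfloor nx\rfloor/n\bigr|<1/n\le1/N<\delta$, whence
\[
\bigl|f(x)-\widetilde{f}_n(x)\bigr|=\left|f(x)-f\!\left(\frac{\lfloor nx\rfloor}{n}\right)\right|<\varepsilon.
\]
Since this holds simultaneously for all $x\in[0,1]$, one has $\sup_{x\in[0,1]}\bigl|f(x)-\widetilde{f}_n(x)\bigr|\le\varepsilon$ for all $n\ge N$, which is exactly uniform convergence $\widetilde{f}_n\to f$ on $[0,1]$.

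I do not expect any genuine obstacle here: the proof is a direct $\varepsilon$--$\delta$ argument. If anything should be flagged as the conceptual heart of the matter, it is the appeal to uniform (rather than merely pointwise) continuity, i.e. the use of compactness of $[0,1]$; without it the $\delta$ could depend on $x$ and the argument would only yield pointwise convergence. Everything else is the routine floor estimate above.
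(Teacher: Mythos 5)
Your proof is correct and follows exactly the paper's own argument: the uniform bound $0\le x-\lfloor nx\rfloor/n<1/n$ combined with uniform continuity of $f$ on the compact interval $[0,1]$. You have merely spelled out the $\varepsilon$--$\delta$ details that the paper leaves implicit.
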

\begin{proof}
Since $[0,1]$ is compact, $f$ is uniformly continuous in $[0,1]$. For every $x\in [0,1]$,
\[
0\leq\left|  \frac{\lfloor nx\rfloor}{n}-x\right|  <\frac{1}{n}%
\]
so the uniform continuity of $f$ in $[0,1]$ gives the result.
\end{proof}

\begin{lem}
\label{imp} Let $\{S_{n}\}$ be a sequence of functions $S_{n}:\{0,\dots,n\}\longrightarrow
\mathbb{R}$ recursively defined as:
\begin{align*}
&  S_{n}(n)=a_{n}\\
&  S_{n}(n-1)=b_{n}\\
&  S_{n}(k)=T_{n}(k)+U_{n}(k)S_{n}(k+1),\ 1\leq k\leq n-2\\
&  S_{n}(0)=c_{n}%
\end{align*}
for some $a_n,b_n, c_n\in \mathbb{R}$, and functions $T_{n},U_{n}:\{0,\dots,n\}\longrightarrow\mathbb{R}$. For $n\in \mathbb{N}$, define $s_{n}:[0,1]\longrightarrow\mathbb{R}$ as
$s_{n}(x)=S_{n}(\lfloor nx\rfloor)$, and $t_{n}=\sum
_{k=1}^{n-2}|T_{n}(k)|$.

If $\lim_{n} a_{n}=\lim_{n} b_{n}=\lim c_{n}=\lim_{n}
t_{n}=0$ and $|U_{n}(k)|\leq1$, then the sequence of functions $\{s_{n}\}$
converges uniformly to $0$ in $[0,1]$.
\end{lem}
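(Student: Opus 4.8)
The plan is to obtain a single bound on $\sup_{x\in[0,1]}|s_n(x)|$ that tends to $0$, which is exactly uniform convergence to the zero function. Since $s_n(x)=S_n(\lfloor nx\rfloor)$ and $\lfloor nx\rfloor$ ranges over $\{0,\dots,n\}$ as $x$ ranges over $[0,1]$, it suffices to bound $\max_{0\le k\le n}|S_n(k)|$ uniformly in $k$ by a quantity going to $0$. So the whole problem reduces to a uniform estimate on the $S_n(k)$ themselves; the passage to $s_n$ is then immediate.

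First I would unroll the recurrence. For $1\le k\le n-2$, iterating $S_n(k)=T_n(k)+U_n(k)S_n(k+1)$ down to the anchor $S_n(n-1)=b_n$ gives, with the usual empty-product convention $\prod_{i=k}^{k-1}U_n(i)=1$,
\[
S_n(k)=\sum_{j=k}^{n-2}\Big(\prod_{i=k}^{j-1}U_n(i)\Big)T_n(j)+\Big(\prod_{i=k}^{n-2}U_n(i)\Big)b_n.
\]
Next I would use the hypothesis $|U_n(k)|\le 1$: each of these products has absolute value at most $1$. Taking absolute values and applying the triangle inequality yields, for every $k$ with $1\le k\le n-2$,
\[
|S_n(k)|\le\sum_{j=k}^{n-2}|T_n(j)|+|b_n|\le\sum_{j=1}^{n-2}|T_n(j)|+|b_n|=t_n+|b_n|.
\]

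Finally I would dispose of the three remaining indices directly from the definition, namely $|S_n(0)|=|c_n|$, $|S_n(n-1)|=|b_n|$ and $|S_n(n)|=|a_n|$. Combining these with the previous estimate gives the uniform bound
\[
\max_{0\le k\le n}|S_n(k)|\le |a_n|+|b_n|+|c_n|+t_n,
\]
whence $\sup_{x\in[0,1]}|s_n(x)|\le |a_n|+|b_n|+|c_n|+t_n\to 0$ by hypothesis, so $\{s_n\}$ converges uniformly to $0$ on $[0,1]$.

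There is no serious obstacle here: the argument is a finite telescoping estimate rather than a genuine convergence subtlety, and no pointwise analysis is needed because the bound is already uniform in $k$. The only points requiring care are getting the empty-product convention and the index ranges right when unrolling, and remembering to account separately for the finitely many boundary anchors $a_n$ and $c_n$ (and $b_n$), which do not appear in the sum $t_n$. Conceptually, the hypothesis $|U_n(k)|\le 1$ is exactly what prevents the accumulated products from amplifying the contributions of the $T_n(j)$, making the crude term-by-term domination by $t_n$ valid.
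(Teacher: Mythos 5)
Your proof is correct and takes essentially the same route as the paper: unroll the recurrence down to the anchor $S_n(n-1)=b_n$, use $|U_n(k)|\le 1$ to dominate every product by $1$, obtain the uniform bound $|S_n(k)|\le |a_n|+|b_n|+|c_n|+t_n$, and pass to $s_n$. The only cosmetic difference is that the paper indexes the unrolled sum backwards from $n$ and formally invokes its Lemma 1 at the end, whereas you correctly observe that the bound is already uniform in $x$ so no further lemma is needed.
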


\begin{proof}
By recurrence, for $k\in \left\{ 1,\ldots, n-2 \right\}$, we have
\[
S_{n}(k)=b_{n}\prod_{i=2}^{n-k}U_{n}(n-i)+\sum_{i=2}^{n-k}\left(
{\small \prod_{j=i+1}^{n-k}U_{n}(n-j)}\right)  T_{n}(n-i).
\]

Taking into account that $0\leq \lfloor nx \rfloor \leq n$ for $x\in[0,1]$, we get
\[
|s_{n}(x)|=|S_{n}(\lfloor nx\rfloor)|\leq|a_{n}|+|b_{n}|+|c_{n}|+t_{n}%
\]
and the result follows from Lemma \ref{lem1p}.
\end{proof}

\begin{rem}
\label{remk0} Notice that even removing $\lim_{n} a_{n}=0$ and
$\lim_{n} c_{n}=0$ in Lemma \ref{imp}, we can still prove that $\lim_{n}
S_{n}(k)=0$ for every $1\leq k\leq n$. This also remains true if we furthermore
replace the condition $\lim_{n} t_{n}=0$ by the weaker condition $\lim_{n}
T_{n}(k)=0$ for every $1\leq k\leq n$. Thus, whatever $a_n$ and $c_n$ are, the sequence $s_n(x)$ converges uniformly to $0$ in $[\epsilon, 1-\epsilon]$ for any $\epsilon>0$.
\end{rem}

We can now prove our main result

\begin{theorem}
\label{main} Let $\mu\in \mathbb{R} $ be a constant real number and $\{F_{n}\}$ a sequence of functions $F_{n}:\{0,\dots,n\}\longrightarrow
\mathbb{R}$ recursively defined as
\begin{align*}
&  F_{n}(n)=\mu,\\
&  F_{n}(k)=G_{n}(k)+H_{n}(k)F_{n}(k+1),\ 0\leq k\leq n-1,
\end{align*}
for some functions $G_{n},H_{n}:\{0,\dots,n\}\longrightarrow\mathbb{R}$.

For every $n\in \mathbb{N}$, let $f_{n},g_{n},h_{n}:[0,1]\longrightarrow
\mathbb{R}$ be the functions $f_{n}(x)=F_{n}(\lfloor nx\rfloor)$, $h_{n}(x)=n(1-H_{n}%
(\lfloor nx\rfloor))$, and $g_{n}(x)=nG_{n}(\lfloor nx\rfloor)$, respectively. Assume the following conditions hold:

\begin{enumerate}
\item $|H_{n}(k)|\leq1$.

\item $\lim_{n} \big(G_{n}(n-1)+\mu H_{n}(n-1)\big)=\mu$.

\item There exist $h,g\in C^{1}(0,1)$ such that the differential
equation $y^{\prime}=yh-g$ admits a solution $f\in C[0,1]$ with:

\begin{itemize}
\item[(i)] $f(1)=\mu$,

\item[(ii)] $\lim_{n} \big(G_{n}(0)+f(0)H_{n}(0)\big)=f(0)$.

\item[(iii)] $\lim_{n} \frac{1}{n}\sum_{k=1}^{n-2}|V_{n}(k)|=0$, where
\[
V_{n}(k)=\left(  g_{n}\left(  \frac{k}{n}\right)  -g\left(  \frac{k+1}%
{n}\right)  \right)  -f\left(  \frac{k+1}{n}\right)  \left(  h_{n}\left(
\frac{k}{n}\right)  -h\left(  \frac{k+1}{n}\right)  \right)  .
\]

\item[(iv)] $\lim_{n} \sum_{k=1}^{n-2}\frac{M_{n}(k)}{n^{2}}=0$, where $M_n(k)$ is given by
$$M_{n}(k)=\max\{|f^{\prime\prime}(x)|:x\in[k/n,(k+1)/n]\}.$$
\end{itemize}
\end{enumerate}

Then, the sequence of functions $\{f_{n}\}$ converges uniformly to $f$ on
$[0,1]$.
\end{theorem}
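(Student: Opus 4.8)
The plan is to measure the gap between $F_n$ and the sampled values of $f$ by introducing the error sequence $S_n(k)=F_n(k)-f(k/n)$ and showing that it satisfies the hypotheses of Lemma \ref{imp}, so that $s_n(x)=S_n(\lfloor nx\rfloor)$ converges uniformly to $0$ on $[0,1]$. Since $f_n(x)=S_n(\lfloor nx\rfloor)+f(\lfloor nx\rfloor/n)$ and, by Lemma \ref{lem1p}, $f(\lfloor nx\rfloor/n)\to f$ uniformly on $[0,1]$, this would immediately give the desired uniform convergence of $f_n$ to $f$. So the whole task reduces to verifying the four scalar conditions $a_n,b_n,c_n,t_n\to 0$ required by Lemma \ref{imp}.

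First I would substitute $F_n(k+1)=S_n(k+1)+f((k+1)/n)$ into the defining recurrence of $F_n$ to obtain $S_n(k)=T_n(k)+H_n(k)S_n(k+1)$, where $T_n(k)=G_n(k)+H_n(k)f((k+1)/n)-f(k/n)$. Taking $U_n=H_n$, so that $|U_n(k)|\le 1$ by hypothesis (1), this is exactly the recurrence of Lemma \ref{imp}. The boundary data are then $a_n=S_n(n)=\mu-f(1)=0$ by (i), and $b_n=S_n(n-1)=\bigl(G_n(n-1)+\mu H_n(n-1)\bigr)-f((n-1)/n)$, which tends to $0$ by condition (2) together with the continuity of $f$ at $1$. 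It then remains to control $t_n=\sum_{k=1}^{n-2}|T_n(k)|$ and $c_n=S_n(0)$.

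The heart of the argument is the estimate of $t_n$, and this is precisely where conditions (iii) and (iv) are designed to enter. Writing $G_n(k)=g_n(k/n)/n$ and $H_n(k)=1-h_n(k/n)/n$, and expanding $f(k/n)$ about $(k+1)/n$ by Taylor's theorem with Lagrange remainder, I would get $f((k+1)/n)-f(k/n)=f'((k+1)/n)/n-f''(\xi_k)/(2n^2)$ for some $\xi_k\in[k/n,(k+1)/n]$; these intervals sit in $(0,1)$, where the equation $f'=fh-g$ with $h,g\in C^1(0,1)$ bootstraps $f$ to be $C^2$, so the remainder is legitimate. Substituting and then replacing $f'((k+1)/n)$ by $f((k+1)/n)h((k+1)/n)-g((k+1)/n)$ through the differential equation, the first-order terms collapse exactly into $V_n(k)/n$, leaving $T_n(k)=V_n(k)/n-f''(\xi_k)/(2n^2)$. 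Hence $|T_n(k)|\le |V_n(k)|/n+M_n(k)/(2n^2)$, and summing over $k$ yields $t_n\to 0$ by (iii) and (iv). Finally, the recurrence applied at $k=0$ gives $c_n=S_n(0)=T_n(0)+H_n(0)S_n(1)$; condition (ii), together with the continuity of $f$ at $0$ and $|H_n(0)|\le 1$, forces $T_n(0)\to 0$, while the explicit formula in the proof of Lemma \ref{imp} gives $|S_n(1)|\le |b_n|+t_n\to 0$. Thus $c_n\to 0$, and with $a_n,b_n,c_n,t_n$ all vanishing Lemma \ref{imp} applies.

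I expect the Taylor-expansion step to be the crux: the whole point is that the cumbersome-looking quantities $V_n(k)$ and $M_n(k)$ in (iii) and (iv) are nothing but the first-order residual and the second-order remainder that appear when the difference equation $S_n(k)=T_n(k)+H_n(k)S_n(k+1)$ is compared term by term with the differential equation $f'=fh-g$. Getting the sampling indices to line up correctly ($k/n$ for $g_n,h_n$ versus $(k+1)/n$ for $f,g,h$) so that these residuals emerge cleanly is the only delicate bookkeeping; everything else is a direct verification of the hypotheses of Lemma \ref{imp}.
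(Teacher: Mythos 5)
Your proposal is correct and follows essentially the same route as the paper's proof: the same error sequence $S_n(k)=F_n(k)-f(k/n)$, the same Taylor-plus-ODE computation identifying $T_n(k)=V_n(k)/n-f''(\xi_k)/(2n^2)$, the same verification of the boundary terms $a_n,b_n,c_n$ (including the use of the explicit formula behind Lemma \ref{imp} to get $S_n(1)\to 0$), and the same conclusion via Lemma \ref{lem1p}. The only discrepancy is that the paper writes $H_n(k+1)S_n(k+1)$ in the recurrence for $S_n(k)$ where your (correct) $H_n(k)S_n(k+1)$ appears, which is immaterial since Lemma \ref{imp} only needs $|U_n|\le 1$.
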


\begin{proof}
By definition $f\in C^{2}(0,1)$, so that Taylor's theorem ensures that for each $k\in\{1,\ldots,n-2\}$, there exists $c_n(k)\in (k/n, (k+1)/n)$ such that:
\[
f\left(  \frac{k}{n}\right)  =f\left(  \frac{k+1}{n}\right)  -\frac{1}%
{n}f^{\prime}\left(  \frac{k+1}{n}\right)  +\frac{1}{2n^{2}}f^{\prime\prime
}\left(  c_{n}(k)\right)  ,
\]
On the other hand, since $f$ satisfies
the differential equation $y^{\prime}=yh-g$ in $(0,1)$, then for
 $k\in \left\{ 1,\ldots, n-2 \right\}$ the above equality can be rewritten as
\[
f\left(  \frac{k}{n}\right)  =f\left(  \frac{k+1}{n}\right)  -\frac{1}%
{n}\left(  f\left(  \frac{k+1}{n}\right)  h\left(  \frac{k+1}{n}\right)
-g\left(  \frac{k+1}{n}\right)  \right)  +\frac{1}{2n^{2}}f^{\prime\prime
}\left(  c_{n}(k)\right)  .
\]

Define, for each $n$, the function $S_{n}:\{0,\dots,n\}\longrightarrow
\mathbb{R}$ as $S_{n}(k)=F_{n}(k)-f\left(  \frac{k}{n}\right)  $. Certainly, the following equalities hold:
\begin{align*}
&  S_{n}(n)=F_{n}(n)-f(1)=0\\
&  S_{n}(n-1)=F_{n}(n-1)-f\left(  \frac{n-1}{n}\right)  =G_{n}(n-1)+\mu
H_{n}(n-1)-f\left(  \frac{n-1}{n}\right) \\
&  S_{n}(k)=\left(  \frac{1}{n}V_{n}(k)-\frac{1}{2n^{2}}f^{\prime\prime}\left(
c_{n}(k)\right)  \right)  +H_{n}(k+1)S_{n}(k+1)\\
&  S_{n}(0)=F_{n}(0)-f(0)
\end{align*}
In order to apply Lemma \ref{imp}, we need to check that
$\lim_{n} S_{n}(0)=0$. To do so, just observe that
\[
F_{n}(0)-f(0)=\big(G_{n}(0)+f(0)H_{n}(0)-f(0)\big)+H_{n}(0)S_{n}%
(1)+H_{n}(0)\big(f(1/n)-f(0)\big),
\]
noting (recall Remark \ref{remk0}) that $\lim_{n} S_{n}(1)=0$, $H_{n}(0)$ is
bounded, and $f\in C[0,1]$. Since the remaining hypothesis of Lemma \ref{imp}
follow immediately from conditions (1)-(3) above, we conclude that $\{s_{n}\}$
converges to 0 uniformly in $[0,1]$.

Now, $f_{n}(x)=F_{n}(\lfloor nx\rfloor)=s_{n}(x)+f\left(  \frac{\lfloor
nx\rfloor}{n}\right)  $. Since $f\in C[0,1]$, Lemma \ref{lem1p} implies that
$f\left(  \frac{\lfloor nx\rfloor}{n}\right)  $ converges uniformly to $f$ on
$[0,1]$ and the result follows.
\end{proof}

\begin{rem}
\label{remgh} As suggested by the expression of $V_{n}(k)$, the most readily
available candidates for $g$ and $h$ are the functions defined as the
(pointwise) limits of the sequences $\{g_{n}\}$ and $\{h_{n}\}$. Namely,
\begin{align*}
g(x)  &  :=\lim_{n} g_{n}(x)=\lim_{n} nG_{n}(\lfloor nx\rfloor),\\
h(x)  &  :=\lim_{n} h_{n}(x)=\lim_{n} n(1-H_{n}(\lfloor nx\rfloor)).
\end{align*}
Note that this construction may not lead to $g,h\in C^{1}(0,1)$.
However, the latter property will hold in most of the forthcoming examples.
\end{rem}

\section{Application to known problems}
In this section, we are going to apply Theorem \ref{main} to a collection of some well-known problems in order to illustrate the usefulness of our result, and to show how all those problems can be dealt with in a systematic way using our technique. Recall from the Introduction that $n$ is the number of independent events (sequential choices), $X_i$ are mutually independent Bernouilli random variables (whose $p_i$ are possibly different), $\mathbf{P}_n$ denotes the expected payoff under the optimal threshold strategy and $\kappa_n$ is the optimal stopping threshold. In all cases, there is a sequence of functions $\{F_n\}$ with $F_n:\{0,\dots,n\}\longrightarrow \mathbb{R}$, defined recursively as:
\begin{align*}
&  F_{n}(n)=\mu,\\
&  F_{n}(k)=G_{n}(k)+H_{n}(k)F_{n}(k+1),\ 0\leq k\leq n-1,
\end{align*}
where $G_n(k)=p_{k+1}^{(n)}P^{(n)}_{k+1}(1)$ and $H_n(k)=1-p^{(n)}_{k+1}$. The following two properties characterize $\kappa_n$:
\begin{enumerate}
\item It maximizes $F_n$, that is: $\textbf{P}_n=F_n(\kappa_n)=\max \{F_n(k):0\leq k\leq n\}$, and
\item It is the largest value for which it is preferable to continue rather than to stop:
$$
F_n(\kappa_n)>P^{(n)}_{\kappa_n}(1),\textrm{ and } F_n(\kappa_n+i)\leq P^{(n)}_{\mathbf{\kappa}_n+i}(1)\textrm{ for }1\leq i\leq n-\kappa_n.
$$
\end{enumerate}

These two properties allow us to apply the following two technical results to perform the desired asymptotic analysis.
\begin{prop}
\label{conv} Let $F_{n}%
:\{0,...,n\}\longrightarrow\mathbb{R}$ be a sequence of functions and  $\mathcal{M}(n)$ an argument for which $F_{n}$ is maximum. Define $\{f_{n}\}_{n\in\mathbb{N}}$ as $f_{n}%
(x):=F_{n}(\lfloor nx\rfloor)$, and assume that $\{f_n\}$ converges uniformly in $[0,1]$ to $f\in C[0,1]$ having a single global maximum $\theta$ in $[0,1]$. Then

\begin{itemize}
\item[i)] $\displaystyle\lim_{n} \mathcal{M}(n)/n =\theta$.

\item[ii)] $\displaystyle\lim_{n} F_{n}(\mathcal{M}(n))= f(\theta)$.
\end{itemize}
\end{prop}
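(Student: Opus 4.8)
The plan is to reduce everything to a single observation. Since $\lfloor n\cdot(\mathcal{M}(n)/n)\rfloor=\mathcal{M}(n)$ and, as $x$ ranges over $[0,1]$, the integer $\lfloor nx\rfloor$ ranges over all of $\{0,\dots,n\}$, the function $f_n$ takes exactly the values $F_n(0),\dots,F_n(n)$, so that
\[
f_n\!\left(\frac{\mathcal{M}(n)}{n}\right)=F_n(\mathcal{M}(n))=\max_{0\le k\le n}F_n(k)=\max_{x\in[0,1]}f_n(x).
\]
In other words, $\mathcal{M}(n)/n$ is a point where $f_n$ attains its maximum on $[0,1]$, and this maximal value is precisely $F_n(\mathcal{M}(n))$. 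Everything will be extracted from this identity together with the uniform convergence $f_n\to f$.

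For part (ii) I would simply invoke the standard fact that uniform convergence preserves maxima: from $\sup_{x}|f_n(x)-f(x)|\to 0$ it follows that $\bigl|\max_{[0,1]}f_n-\max_{[0,1]}f\bigr|\le \sup_x|f_n(x)-f(x)|\to 0$. Since $f$ is continuous on the compact interval $[0,1]$ its maximum is $f(\theta)$, and hence $F_n(\mathcal{M}(n))=\max_{[0,1]}f_n\to f(\theta)$, which is exactly (ii).

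For part (i), write $x_n:=\mathcal{M}(n)/n\in[0,1]$ and argue by compactness: it suffices to show that every convergent subsequence of $\{x_n\}$ has limit $\theta$, since this forces $x_n\to\theta$ in the compact space $[0,1]$. So suppose $x_{n_j}\to\theta'$. The triangle inequality
\[
\bigl|f_{n_j}(x_{n_j})-f(\theta')\bigr|\le \underbrace{\sup_x|f_{n_j}(x)-f(x)|}_{\to 0\ \text{(uniform conv.)}}+\underbrace{\bigl|f(x_{n_j})-f(\theta')\bigr|}_{\to 0\ \text{(continuity of }f)}
\]
shows $f_{n_j}(x_{n_j})\to f(\theta')$. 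But $f_{n_j}(x_{n_j})=\max_{[0,1]}f_{n_j}\to f(\theta)$ by the computation underlying (ii), so $f(\theta')=f(\theta)=\max_{[0,1]}f$. Since $\theta$ is the \emph{unique} global maximum of $f$, this forces $\theta'=\theta$, and the argument is complete.

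I expect the only delicate point to be part (i): one must pass to subsequences rather than argue directly, because $f_n$ may possess several competing near-maximizers while $\mathcal{M}(n)$ selects just one of them. The hypothesis that the limit $f$ has a \emph{single} global maximum is exactly what prevents these maximizers from drifting toward a different location, and it is the crux that pins down every subsequential limit to be $\theta$.
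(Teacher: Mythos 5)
Your proof is correct and complete. There is, however, nothing internal to compare it against: the paper's ``proof'' of Proposition \ref{conv} is literally the citation ``See \cite{mejorpeor}'', so your write-up supplies an argument the paper itself omits. Your two ingredients are the standard ones: first, the identity $f_n\left(\mathcal{M}(n)/n\right)=F_n(\mathcal{M}(n))=\max_{[0,1]}f_n$, which holds because $\lfloor nx\rfloor$ sweeps all of $\{0,\dots,n\}$ as $x$ runs over $[0,1]$ and because $\mathcal{M}(n)/n$ is mapped back to $\mathcal{M}(n)$; second, the facts that uniform convergence gives $\bigl|\max_{[0,1]}f_n-\max_{[0,1]}f\bigr|\le\sup_x|f_n(x)-f(x)|\to 0$ (yielding (ii)), and that any subsequential limit $\theta'$ of $\mathcal{M}(n)/n$ must satisfy $f(\theta')=f(\theta)$, hence $\theta'=\theta$ by uniqueness of the maximizer (yielding (i)). You are right that the subsequence step is the delicate point: the inequality chain $f_{n_j}(x_{n_j})\to f(\theta')$ combined with $f_{n_j}(x_{n_j})=\max f_{n_j}\to f(\theta)$ is exactly where the single-maximum hypothesis is indispensable, since without it the normalized argmaxes could oscillate between distinct maximizers of $f$. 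This is the argument one would expect to find in the cited reference, so your proof can stand as a self-contained replacement for the citation.
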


\begin{proof} See \cite{mejorpeor}.
\end{proof}

\begin{prop}
\label{nuevaa} Let $\{F_{n},Q_n\}_{n\in\mathbb{N}}$   be two sequences of real functions defined in $\{0,\dots,n\}$ and let $\mathcal{N}(n)\in\{0,\dots,n-1\}$ be such that
\begin{align*}
Q_n(\mathcal{N}(n))&< F_{n}\left(\mathcal{N}(n) \right),\\
Q_n({\mathcal{N}(n)+i})&\geq F_{n}\left(\mathcal{N}(n)+i \right)\textrm{ for all } i=1,...,n-\mathcal{N}(n).
\end{align*}
Assume that the sequences of functions $\{f_{n}\}_{n\in\mathbb{N}}$ and $\{q_{n}\}_{n\in\mathbb{N}}$
defined by $f_{n}(x)=F_{n}(\lfloor nx\rfloor)$ and $q_{n}(x)=Q_{n}(\lfloor nx\rfloor)$ for $x\in[0,1]$ converge uniformly in $[0,1]$ to continuous functions $f$ and $q$ (respectively), and assume there is a unique $\theta\in(0,1]$ such that  $q(x)-f(x)$ changes sign around $\theta$. Then,
$\displaystyle\lim_{n}\mathcal{N}(n)/n=\theta$.
\end{prop}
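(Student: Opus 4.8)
The plan is to reduce everything to the single difference function. I would set $d_n := q_n - f_n$ and $d := q - f$; since $\{f_n\}$ and $\{q_n\}$ each converge uniformly on $[0,1]$, the sequence $\{d_n\}$ converges uniformly to the continuous function $d$. The two hypotheses on $\mathcal{N}(n)$ serve only to identify it combinatorially: because $Q_n(\mathcal{N}(n)) < F_n(\mathcal{N}(n))$ while $Q_n(\mathcal{N}(n)+i) \ge F_n(\mathcal{N}(n)+i)$ for every $i \ge 1$, the index $\mathcal{N}(n)$ is exactly the \emph{largest} $k \in \{0,\dots,n-1\}$ with $d_n(k/n) < 0$. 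Read together with these inequalities (which force $d_n<0$ at the threshold and $d_n\ge 0$ beyond it), the ``unique sign change at $\theta$'' hypothesis I would interpret as: $d<0$ on $(0,\theta)$ and $d>0$ on $(\theta,1]$. The goal is then to squeeze $x_n := \mathcal{N}(n)/n$ between $\theta - \epsilon$ and $\theta + \epsilon$.

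For the lower bound $x_n \geq \theta - \epsilon$, fix a point $y \in (\theta - \epsilon, \theta)$ (with $y>0$, possible since $\theta>0$). As $d(y)<0$, uniform convergence gives $d_n(y)<0$ for all large $n$, so $\lfloor ny\rfloor$ is an index where $d_n$ is negative; since $\mathcal{N}(n)$ is the \emph{largest} such index, $\mathcal{N}(n) \geq \lfloor ny \rfloor$, whence $x_n \ge \theta - \epsilon$ once $n$ is large (using $\lfloor ny\rfloor > ny-1$). For the upper bound $x_n < \theta + \epsilon$ (the case $\theta=1$ being trivial since $x_n \le (n-1)/n < 1$), I would use compactness: on $[\theta+\epsilon,1]$ the continuous function $d$ attains a positive minimum $m>0$, so uniform convergence yields $d_n > 0$ on all of $[\theta + \epsilon, 1]$ for large $n$. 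Consequently no index $k$ with $k/n \in [\theta+\epsilon,1]$ can have $d_n(k/n)<0$, and in particular the largest such index satisfies $\mathcal{N}(n) < n(\theta+\epsilon)$. Combining the two bounds gives $|x_n - \theta| < \epsilon$ eventually, i.e. $\lim_n \mathcal{N}(n)/n = \theta$.

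The routine ingredients are the two $\epsilon$-estimates and the floor-function bookkeeping, which I would not belabor. The step I expect to be delicate is the upper bound, and the reason is instructive: bounding the \emph{largest} negative index from above cannot be done from the local behaviour of $d$ near $\theta$ alone — one genuinely needs $d$ to remain strictly positive throughout the whole interval $(\theta,1]$, so that a positive \emph{uniform} lower bound is available on the compact set $[\theta+\epsilon,1]$. This is precisely what the \emph{uniqueness} of the sign change buys us: it rules out a second, ``late'' crossing, and in particular any tangential return of $d$ to $0$ to the right of $\theta$; without it, $\mathcal{N}(n)/n$ could accumulate at a later sign-neutral point. The lower bound, by contrast, uses only that $d$ is negative immediately to the left of $\theta$.
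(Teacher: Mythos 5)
Your proof is correct, but it follows a genuinely different route from the paper's. The paper does not do a direct squeeze: it clamps $Q_n$ and $F_n$ to constant values outside the index window $[\mathcal{N}(n)-\lfloor\epsilon n\rfloor,\,\mathcal{N}(n)+\lfloor\epsilon n\rfloor]$, forms the auxiliary sequence $H_n=1-(\overline{Q}_n-\overline{F}_n)^2$, argues that $H_n$ converges uniformly to $h=1-(q-f)^2$, which has a single global maximum at $\theta$, and then invokes the argmax-convergence result (Proposition \ref{conv}) to conclude $\mathcal{N}(n)/n\to\theta$. You instead exploit directly that $\mathcal{N}(n)$ is the largest index $k$ with $Q_n(k)<F_n(k)$: one test point immediately to the left of $\theta$ gives the lower bound, and a compactness/uniform-positivity argument on $[\theta+\epsilon,1]$ gives the upper bound. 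Your route is more elementary and, in fact, tighter: the paper's reduction leaves loose ends (nothing guarantees that $\mathcal{N}(n)$ exactly maximizes $H_n$, as Proposition \ref{conv} requires, and the clamping window is centered at the unknown $\mathcal{N}(n)$, so the claimed uniform limit of $H_n$ implicitly presupposes information about where $\mathcal{N}(n)/n$ lies), whereas every step of yours is checkable. What the paper's route buys is economy of exposition: it reuses Proposition \ref{conv}, so both asymptotic statements rest on a single convergence lemma.

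One caveat, which applies equally to the paper: your side remark that uniqueness of the sign change ``rules out any tangential return of $d$ to $0$'' to the right of $\theta$ is not literally true. A point $z>\theta$ with $d(z)=0$ but $d\geq 0$ near $z$ is not a sign change, so it is compatible with the hypothesis as written; at such a $z$ your positive minimum $m$ on $[\theta+\epsilon,1]$ would fail to exist (and the paper's $h$ would acquire a second global maximum), and $\mathcal{N}(n)/n$ could indeed accumulate at $z$. Both proofs therefore rely on the stronger reading that $q-f$ is strictly negative on $(0,\theta)$ and strictly positive on $(\theta,1]$; since the paper needs exactly the same reading, this is a defect of the proposition's wording rather than a gap specific to your argument, but it should be flagged as an assumption rather than asserted to follow from uniqueness alone.
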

\begin{proof}
  By the uniform continuity, if there is such $\theta$, then it is unique under the conditions on $Q_n$ and $F_n$. Let $\epsilon>0$ be such that $q(x)<f(x)$ for $x\in[\theta-\epsilon, \theta)$ and $q(x)>f(x)$ for $x\in(\theta, \theta+\epsilon]$. Define the new sequences
  \begin{equation*}
    \overline{Q}_n(k) = \left\{
      \begin{array}{ll}
        Q_n(\mathcal{N}(n) - \lfloor\epsilon n\rfloor) &\mbox{ if } k<\mathcal{N}(n) - \lfloor \epsilon n\rfloor\\[0.3em]
        Q_n(k) & \mbox{ if } \mathcal{N}(n) - \lfloor \epsilon n\rfloor \leq k \leq \mathcal{N}(n) + \lfloor \epsilon n\rfloor\\[0.3em]
        Q_n(\mathcal{N}(n) + \lfloor\epsilon n\rfloor) & \mbox{ if } k>\mathcal{N}(n) + \lfloor \epsilon n\rfloor
      \end{array}
    \right.
  \end{equation*}
  and
  \begin{equation*}
    \overline{F}_n(k) = \left\{
      \begin{array}{ll}
        F_n(\mathcal{N}(n) - \lfloor\epsilon n\rfloor) &\mbox{ if } k<\mathcal{N}(n) - \lfloor \epsilon n\rfloor\\[0.3em]
        F_n(k) & \mbox{ if } \mathcal{N}(n) - \lfloor \epsilon n\rfloor \leq k \leq \mathcal{N}(n) + \lfloor \epsilon n\rfloor\\[0.3em]
        F_n(\mathcal{N}(n) + \lfloor\epsilon n\rfloor) & \mbox{ if } k>\mathcal{N}(n) + \lfloor \epsilon n\rfloor
      \end{array}
    \right.
  \end{equation*}
  These sequences converge uniformly to $q(x),f(x)$ for $x\in[\theta-\epsilon, \theta+\epsilon]$ and to the values $q(\theta-\epsilon)$, $f(\theta-\epsilon)$, (and $q(\theta+\epsilon)$, $f(\theta+\epsilon)$) for $x\leq \theta-\epsilon$, (and $x\geq \theta+\epsilon$), respectively. By the continuity of $q(x)$ and $f(x)$, the function defined in $[0,1]$ by $h(x)=1-(q(x)-f(x))^2$ has a single maximum at $\theta$. The sequence of functions $H_n=\{1-(\overline{Q}_n-\overline{F}_n)^2\}$ converges uniformly to $h(x)$ in $[0,1]$. The result follows now from Proposition \ref{conv}.
\end{proof}

In what follows, each problem is succinctly stated and we will make extensive use of Theorem 2, and Propositions \ref{conv} and \ref{nuevaa}. The required conditions are stated without explanation when they are easy to verify.

\subsection{The Classical Secretary Problem.}

An employer is willing to hire the best one
of $n$ candidates, who can be ranked somehow. They are interviewed one by one in
random order and a decision about each particular candidate has to be made
immediately after the interview, taking into account that, once rejected, a
candidate cannot be called back. During the interview, the employer can rank
the candidate among all the preceding ones, but is unaware of the rank of the
yet unseen candidates. The goal is to determine the optimal strategy that
maximizes the probability of successfully selecting the best candidate.

This problem is an optimal stopping one with a threshold optimal strategy \cite{ods,FER,gil,101}
that consists in choosing the first maximal candidate interviewed after the optimal
threshold. Using the notation and terminology from Section 1, $X^{(n)}_{k}=1$ if and only
if the $k$-th candidate is better than all the previous ones; so
$p^{(n)}_{k}=\frac{1}{k}$, and the payoff function is $P^{(n)}_{k}(1)=\frac{k}{n}$, since
$\frac{k}{n}$ is precisely the probability of success if we choose the $k$-th candidate
provided it is maximal at that step. The expected payoff using a threshold strategy (with
threshold $k$) is equal to the probability of successfully choosing the best candidate
using such strategy. Thus, if we denote this probability by $F_{n}(k)$, it follows from
(\ref{rec}) that the functions $F_{n}(k)$ satisfy the following recurrence relation:
\begin{align*}
F_{n}(k)  &  =\frac{1}{n}+ \frac{k}{k+1}F_{n}(k+1),\\
F_{n}(n)  &  =0.
\end{align*}
and the objective is to maximize this probability.

With the notation of Theorem \ref{main} we have that $$\mu=0, G_{n}(k)   =\frac{1}{n},\textrm{ and } H_{n}(k)=\frac{k}{k+1}.$$
so that
\begin{align*}
g_{n}(x)  &  =nG_{n}(\lfloor nx\rfloor)=1,\\
h_{n}(x)  &  =n(1-H_{n}(\lfloor nx\rfloor))=\frac{n}{\lfloor nx\rfloor+1},
\end{align*}
and taking into account Remark \ref{remgh}, we can consider
\begin{align*}
g(x)  &  =\lim_{n}g_{n}(x)=1,\\
h(x)  &  =\lim_{n}h_{n}(x)=\frac{1}{x}.
\end{align*}
Thus, $f(x)$ is the solution of the IVP
\[
\left\{
\begin{array}
[c]{c}%
y^{\prime}=\dfrac{y}{x}-1\\
\multicolumn{1}{l}{y(1)=0}%
\end{array}
\right.
\]
which gives:
\[
f(x)=-x\log x.
\]
The hypotheses of Theorem \ref{main} hold:
\begin{itemize}
\item Conditions (1), (2), (3i) and (3ii) are straightforward.

\item Condition (3iii) holds because $V_{n}(k)=0$.

\item Condition (3iv) follows because $M_{n}(k)=n/k$, as
$|f^{\prime\prime}(x)|=1/x$ is decreasing.
\end{itemize}

Applying Theorem \ref{main}, $F_{n}(\lfloor
nx\rfloor)$ converges uniformly to $f(x)=-x\log x$ in $[0,1]$. Hence, since
$f(x)$ reaches its maximum at $x=e^{-1}$ and $f(e^{-1})=e^{-1}$,
Proposition \ref{conv} gives the well-known results:
\begin{align*}
&  \lim_{n} \frac{\kappa_{n}}{n}=e^{-1}\\
&  \lim_{n} \mathbf{P}_{n}=e^{-1}%
\end{align*}

\subsection{The Postdoc variant.}
This problem is essentially the previous one with the difference that
the goal is to select the second best candidate. We know
\cite{mejorpeor, Rose,posdoc2021} that the probability $F_{n}(k)$ of
successfully choosing the second best candidate using a threshold
strategy with threshold $k$ satisfies:
\begin{align*}
F_{n}(k)  &  =\frac{k}{n(n-1)}+ \frac{k}{k+1}F_{n}(k+1),\\
F_{n}(n)  &  =0.
\end{align*}
Thus, the relevant data are given in Table 1
\begin{table}[h]
\caption{Data for the Postdoc variant.}%
\label{Tpost}%
\begin{tabular}
[c]{|c|c|c|c|c|c|c|}\hline
$\mu$ & $G_{n}(k)$ & $g_{n}(x)$ & $g(x)$ & $H_{n}(k)$ & $h_{n}(x)$ & $h(x)$ \\\hline
$0$ & $\frac{k}{n(n-1)}$ & $\frac{\lfloor nx\rfloor}{n-1}$ & $x$ & $\frac
{k}{k+1}$ & $\frac{n}{\lfloor nx\rfloor+1}$ & $\frac{1}{x}$ \\\hline
\end{tabular}
\end{table}

The corresponding IVP is:
\[
\left\{
\begin{array}
[c]{c}%
y^{\prime}=\dfrac{y}{x}-x\\
\multicolumn{1}{l}{y(1)=0}%
\end{array}
\right.
\]
with solution:
\[
f(x)=x-x^{2}%
\]

In this example, as in most of the subsequent ones, conditions (1), (2),
(3i), and (3ii) from Theorem \ref{main} are again straightforward (in fact we will not mention them any
more). Conditions (3iii) and (3iv) hold because:
\begin{equation*}
  V_n(k)=\frac{k-n+1}{(n-1) n},\,\,\,
  M_{n}(k)=2
\end{equation*}

By Theorem \ref{main}, the sequence $F_{n}(\lfloor
nx\rfloor)$ converges uniformly to $f(x)$ in $[0,1]$. Since
$f(x)$ reaches its maximum at $x=\frac12$ and $f(\frac12)=\frac14$ we can apply
Proposition \ref{conv} to get the well-known results \cite{mejorpeor,Rose,posdoc2021}:
\begin{align*}
\lim_{n} \frac{\kappa_{n}}{n}=\frac12,\,\,\,
\lim_{n} \mathbf{P}_{n}=\frac14.
\end{align*}

\subsection{The Best-or-Worst variant}
In this version, the aim is to select either the best or the worst candidate, and it is also an optimal stopping problem. The
corresponding probabilities $F_{n}(k)$  of successfully choosing the best or worst candidate using a threshold
strategy with threshold $k$ satisfy \cite{mejorpeor}:
\begin{align*}
F_{n}(k)  &  =\frac{2}{n}+ \frac{k-1}{k+1}F_{n}(k+1),\\
F_{n}(n)  &  =0.
\end{align*}

The relevant data are given in Table 2:
\begin{table}[h]
\caption{Data for the Best-or-worst variant.}%
\label{Tbow}%
\begin{tabular}
[c]{|c|c|c|c|c|c|c|}\hline
$\mu$ & $G_{n}(k)$ & $g_{n}(x)$ & $g(x)$ & $H_{n}(k)$ & $h_{n}(x)$ & $h(x)$ \\\hline
$0$ & $\frac{2}{n}$ & $2$ & $2$ & $\frac{k-1}{k+1}$ & $\frac{2n}{\lfloor
nx\rfloor+1}$ & $\frac{2}{x}$ \\\hline
\end{tabular}
\end{table}

The corresponding IVP is:
\[
\left\{
\begin{array}
[c]{c}%
y^{\prime}=\dfrac{2y}{x}-2\\
\multicolumn{1}{l}{y(1)=0}%
\end{array}
\right.
\]
whose solution is:
\[
f(x)=2x-2x^{2}%
\]

Conditions (3iii) and (3iv) in Theorem \ref{main} hold in this case because $V_n(k)=0$ and $M_n(k)=4$. As a consequence, $F_{n}(\lfloor
nx\rfloor)$ converges uniformly to $f(x)$ in $[0,1]$. Since
$f(x)$ reaches its maximum at $x=\frac12$ and $f(\frac12)=\frac12$
Proposition \ref{conv} gives the results from \cite{mejorpeor}:
\begin{equation*}
\lim_{n} \frac{\kappa_{n}}{n}=\frac12,\,\,\,
\lim_{n} \mathbf{P}_{n}=\frac12
\end{equation*}

\subsection{The Secretary Problem with Uncertain Employment}

This variant \cite{refusal} introduces the possibility that each candidate
can be effectively hired only with certain fixed probability
$0<p\leq1$ (independent of the candidate). If a specific candidate
cannot be hired, it cannot be chosen and the process must
continue. Obviously, the case $p=1$ is the classical problem while the case
$p=0$ is absurd.

In this situation, $p^{(n)}_{k}=\dfrac{p}{k}$ and
$P^{(n)}_k(1)=\dfrac{k}{n}$. Hence, the probabilities $F_{n}(k)$
satisfy the following recurrence relation.
\begin{align*}
F_{n}(k)  &  =\frac{p}{n}+ \left(  1-\frac{p}{k+1}\right)  F_{n}(k+1),\\
F_{n}(n)  &  =0.
\end{align*}

Table \ref{Tuncer} summarizes the relevant data (all the computations are straightforward).

\begin{table}[h]
\caption{Data for the Secretary Problem with uncertain employment.}%
\label{Tuncer}%
\begin{tabular}
[c]{|c|c|c|c|c|c|c|}\hline
$\mu$ & $G_{n}(k)$ & $g_{n}(x)$ & $g(x)$ & $H_{n}(k)$ & $h_{n}(x)$ & $h(x)$ \\\hline
$0$ & $\frac{p}{n}$ & $p$ & $p$ & $\frac{k+1-p}{k+1}$ & $\frac{pn}{\lfloor
nx\rfloor+1}$ & $\frac{p}{x}$ \\\hline
\end{tabular}
\end{table}

The corresponding IVP is:
\[
\left\{
\begin{array}
[c]{c}%
y^{\prime}=\dfrac{py}{x}-p\\
\multicolumn{1}{l}{y(1)=0}%
\end{array}
\right.
\]
with solution:
\[
f(x)=\frac{p(x^{p}-x)}{1-p}%
\]

Conditions (3iii) and (3iv) of Theorem \ref{main} hold because
$V_n(k)=0$ and $M_n(k)=   p^{2}\left(  \frac{n}{k}\right)
^{2-p}$. Thus, $F_{n}(\lfloor
nx\rfloor)$ converges uniformly to $f(x)$ in $[0,1]$. The function
$f(x)$ reaches its maximum at $x=p^{\frac{1}{1-p}}$, and $f(p^{\frac{1}{1-p}})=p^{\frac{1}{1-p}}$, so that
Proposition \ref{conv} provides the results from \cite{refusal}:
\begin{align*}
\lim_{n} \frac{\kappa_{n}}{n}=p^{\frac{1}{1-p}},\,\,\,
\lim_{n} \mathbf{P}_{n}=p^{\frac{1}{1-p}}%
\end{align*}
Observe that, as expected, if $p\to 1$ these values converge to the solution of the classical problem.

\subsection{The Secretary Problem with interview cost}

In this variant \cite{coste1}, a cost $\frac{c}{n}$ (with $0\leq c<1$) for each observed
candidate is introduced (if $c=0$ the problem is the classical one). The difference with the classical problem (cf. subsection 3.1) is
that, in this situation, $p^{(n)}_{k}=\frac{1}{k}$, the payoff
function is $P^{(n)}_{k}(1)=\frac{k}{n}(1-c)$ and
$\mu=-c$. Thus,
\begin{align*}
F_{n}(k)  &  =\frac{1-c}{n}+ \frac{k}{k+1}F_{n}(k+1),\\
F_{n}(n)  &  =-c.
\end{align*}

Table \ref{Tcost} contains the relevant data.

\begin{table}[h]
\caption{Data for the Secretary Problem with interview cost.}%
\label{Tcost}
\begin{tabular}
[c]{|c|c|c|c|c|c|c|}\hline
$\mu$ & $G_{n}(k)$ & $g_{n}(x)$ & $g(x)$ & $H_{n}(k)$ & $h_{n}(x)$ & $h(x)$ \\\hline
$-c$ & $\frac{1-c}{n}$ & $1-c$ & $1-c$ & $\frac{k}{k+1}$ & $\frac{n}{\lfloor
nx\rfloor+1}$ & $\frac{1}{x}$ \\\hline
\end{tabular}
\end{table}

The corresponding IVP is:
\[
\left\{
\begin{array}
[c]{c}%
y^{\prime}=\dfrac{y}{x}-(1-c)\\
\multicolumn{1}{l}{y(1)=-c}%
\end{array}
\right.
\]
with solution:
\[
f(x)=-cx+cx\log x-x\log x
\]

In this case, $V_n(k)=0$ and $M_n(k)=\frac{(1-c)n}{k}$. Theorem \ref{main} holds and $F_{n}(\lfloor
nx\rfloor)$ converges uniformly to $f(x)$ in $[0,1]$. Since
$f(x)$ reaches its maximum at $x=e^{\frac{1}{c-1}}$ and $f\left(e^{\frac{1}{c-1}}\right)=(1-c)e^{\frac{1}{c-1}}$, Proposition \ref{conv} gives the results from \cite{coste1}:

\begin{equation*}
\lim_{n}\frac{\kappa_{n}}{n}=e^{\frac{1}{c-1}},\,\,\,
\lim_{n}\mathbf{P}_{n}=(1-c)e^{\frac{1}{c-1}}%
\end{equation*}
For $c=0$ we obviously recover the values for the classical problem.

\subsection{The win-lose-or-draw Secretary Problem}

In this variant, there is a reward $\alpha$ when choosing the best candidate, a penalty $\beta$ when choosing a wrong one, and a different penalty $\gamma$ when choosing none. The original version of this variant \cite{ferguson} has $\alpha=\beta=1$, and $\gamma=0$.

This problem has $p^{(n)}_{k}=\frac{1}{k}$, and the payoff function is
$$
P^{(n)}_{k}(1)=\alpha\frac{k}{n}-\beta\left(  1-\frac{k}{n}\right)
$$
so that the $F_{n}(k)$ are defined recursively as
\begin{align*}
F_{n}(k)  &  =\frac{(\alpha+\beta)(k+1)-\beta n}{(k+1)n}+ \frac{k}{k+1}%
F_{n}(k+1),\\
F_{n}(n)  &  =-\gamma.
\end{align*}
Notice that if $\alpha=1-\gamma$ and $\beta=0$, we are in the previous case with $c=\gamma$. Also, if $\alpha=1$, and $\beta=\gamma=0$ we are in the Classical Secretary Problem.

The relevant data are contained in Table \ref{Twlod}.
\begin{table}[h]
\caption{Data for the Win-lose-or-draw Secretary Problem.}%
\label{Twlod}
\begin{tabular}
[c]{|c|c|c|c|c|c|c|}\hline
$\mu$ & $G_{n}(k)$ & $g_{n}(x)$ & $g(x)$ & $H_{n}(k)$ & $h_{n}(x)$ & $h(x)$ \\\hline
$-\gamma$ & $\frac{(\alpha+\beta)(k+1)-\beta n}{(k+1)n}$ & $\frac
{(\alpha+\beta)(\lfloor nx\rfloor+1)-\beta n}{\lfloor nx\rfloor+1}$ &
$\alpha+\beta-\frac{\beta}{x}$ & $\frac{k}{k+1}$ & $\frac{n}{\lfloor
nx\rfloor+1}$ & $\frac{1}{x}$ \\\hline %
\end{tabular}
\end{table}

The corresponding IVP is:
$$
\left\{
\begin{array}
[c]{c}%
y^{\prime}=\displaystyle{\frac{y}{x}}-(\alpha+\beta-\frac{\beta}{x})\\
\multicolumn{1}{l}{y(1)=-\gamma}%
\end{array}
\right.
$$
whose solution is:
\[
f(x)=-(\alpha+\beta)x\log x+\beta(x-1)-\gamma x.
\]

Theorem \ref{main} holds because $V_n(k)=0$ and $M_n(k)=\frac{(\alpha+\beta)n}{k}$. As a consequence, $F_{n}(\lfloor
nx\rfloor)$ converges uniformly to $f(x)$ in $[0,1]$. Since
$f(x)$ reaches its maximum at $x=e^{\frac{-\alpha-\gamma}{\alpha+\beta}}$,
Proposition \ref{conv} gives
\begin{equation*}
  \lim_{n}\frac{\kappa_{n}}{n}=e^{\frac{-\alpha-\gamma}{\alpha+\beta}},
  \,\,\,
  \lim_{n}\mathbf{P}_{n}=f\left(  e^{\frac{-\alpha-\gamma}{\alpha+\beta}%
}\right)
\end{equation*}

For $\alpha=\beta=1$ and $\gamma=0$ we get the results given in \cite{ferguson}:
\begin{align*}
  \lim_{n} \frac{\kappa_{n}}{n}=\frac{1}{\sqrt{e}}=0.60653\dots,
  \,\,\,
 \lim_{n} \mathbf{P}_{n} =\frac{2}{\sqrt{e}}-1 = 0.213061\dots
\end{align*}

\subsection{The Best Choice Duration Problem}
This variant specifies a reward of $\frac{n+1-k}%
{n}$ when choosing the best candidate at step $k$ (notice that the
reward decreases with $k$), so that there is an incentive to make the correct choice as soon as possible. We refer to \cite{FER2} and \cite{dura2} for previous studies on this problem.

Setting $p^{(n)}_{k}=\frac{1}{k}$, the payoff function $P^{(n)}_{k}$ is
$$
P^{(n)}_{k}(1)=\frac{k(n+1-k)}{n^{2}}.
$$
so that $F_n(k)$ is given by:
\begin{align*}
F_{n}(k)  &  =\frac{n-k}{n^{2}}+ \frac{k}{k+1}F_{n}(k+1),\\
F_{n}(n)  &  =0.
\end{align*}

Table \ref{Tbcd} includes the summary of the relevant information.
\begin{table}[h]
\caption{Data for the Best Choice Duration Problem.}%
\label{Tbcd}
\begin{tabular}
[c]{|c|c|c|c|c|c|c|}\hline
$\mu$ & $G_{n}(k)$ & $g_{n}(x)$ & $g(x)$ & $H_{n}(k)$ & $h_{n}(x)$ & $h(x)$ \\\hline
$0$ & $\frac{n-k}{n^{2}}$ & $\frac{n-\lfloor nx\rfloor}{n}$ & $1-x$ &
$\frac{k}{k+1}$ & $\frac{n}{\lfloor nx\rfloor+1}$ & $\frac{1}{x}$ \\\hline
\end{tabular}
\end{table}

The IVP for this variant is:
$$
\left\{
\begin{array}
[c]{c}%
y^{\prime}=\displaystyle{\frac{y}{x}}-(1-x)\\
\multicolumn{1}{l}{y(1)=0}%
\end{array}
\right.
$$
with solution:
\[
f(x)=x^{2}-x-x\log x.
\]

In this case, $V_n(k)=\frac{1}{n}$ and $M_n(k)\leq 2+\frac{n}{k}$, so that all the hypotheses from Theorem \ref{main} hold. Thus,
$F_{n}(\lfloor
nx\rfloor)$ converges uniformly to $f(x)$ in $[0,1]$. The maximum of
$f(x)$ is reached at $x=\vartheta=-\frac{1}{2}W(-2e^{-2})$ with $f(\vartheta)=\vartheta-\vartheta^{2}$.
Proposition \ref{conv} the gives the known results \cite{FER2,dura2}:

\begin{equation*}
\lim_{n}\frac{\kappa_{n}}{n}=\vartheta=0.2031878\dots, \,\,\, \lim_{n}\mathbf{P}_{n}=f(\vartheta)=0.161902559\dots
\end{equation*}

\subsection{A simplified Multicriteria Secretary Problem}
In this case, the $n$ candidates are ranked across $m\geq 1$ independent
attributes ($m=1$ is the just classical case), and the aim is to choose a candidate which is the best in one of the
attributes. When a candidate is chosen, it is specified in which attribute
it is considered to be the best. This is a simplification of the original variant \cite{multiatri}, in which the attribute does not have to be specified. This simplification can be seen to be asymptotically negligible, but we do not get into details.

In this case, $p^{(n)}_{k}=  1-\left(  \frac{k-1}{k}\right)  ^{m}$, the payoff function is $P^{(n)}_{k}(1)=\frac{k}{n}$ and $F_n(k)$ is given by:
\begin{align*}
F_{n}(k)  &  =\left(  1-\left(  \frac{k}{k+1}\right)  ^{m}\right)
\frac{k+1}{n}+\left(  \frac{k}{k+1}\right)  ^{m}F_{n}(k+1),\\
F_{n}(n)  &  =0.
\end{align*}

The relevant information is summarized in Table \ref{Mult}.
\begin{table}[h!]
\caption{Data for the Multicriteria Secretary Problem.}%
\label{Mult}
\begin{tabular}
[c]{|c|c|c|c|c|c|c|}\hline
$\mu$ & $G_{n}(k)$ & $g_{n}(x)$ & $g(x)$ & $H_{n}(k)$ & $h_{n}(x)$ & $h(x)$ \\\hline
$0$ & $\left(  1-\left(  \frac{k}{k+1}\right)  ^{m}\right)  \frac{k+1}{n}$ &
$(\lfloor nx\rfloor+1)\left(  1-\left(  \frac{\lfloor nx\rfloor}{\lfloor
nx\rfloor+1}\right)  ^{m}\right)  $ & $m$ & $\left(  \frac{k}{k+1}\right)
^{m}$ & $n\left(  1-\left(  \frac{\lfloor nx\rfloor}{\lfloor nx\rfloor
+1}\right)  ^{m}\right)  $ & $\frac{m}{x}$
\\\hline
\end{tabular}
\end{table}

The corresponding IVP is:
\[
\left\{
\begin{array}
[c]{c}%
y^{\prime}=\displaystyle{\frac{m y}{x}}-m\\
y(1)=0\text{ \ \ \ \ \ \ }%
\end{array}
\right.
\]

whose solution for $m>1$ is (the case $m=1$ should be addressed separately, but it is just the classical case):
\[
f(x)=-\frac{m\left(  x^{m}-x\right)  }{m-1}%
\]

In this problem,
\[
V_{n}(k)=\frac{\left(  k^{m}(k+1)^{1-m}-k+m-1\right)  \left(
-mn\left(  \frac{k+1}{n}\right)  ^{m}+k+1\right)  }{(k+1)(m-1)} \]
so it holds that $|V_{n}(k)|<m/k$, whereas
\[
\left\vert f^{\prime\prime}(x)\right\vert =m^{2}x^{m-2}\leq m^{2}
\]
which give conditions (3iii) and (3iv) of Theorem \ref{main}. Thus, $F_{n}(\lfloor
nx\rfloor)$ converges uniformly to $f(x)$ in $[0,1]$. The function
$f(x)$ reaches its maximum at $x=m^{\frac{1}{1-m}}$ and $f\left(m^{\frac{1}{1-m}}\right)=m^{\frac{1}{1-m}}$, so
Proposition \ref{conv} gives the results from \cite{multiatri}:
\begin{equation*}
\lim_{n}\frac{\kappa_{n}}{n}=\displaystyle{m^{\frac{1}{1-m}}},\,\,\,
\lim_{n}\mathbf{P}_{n}  = \displaystyle{m^{\frac{1}{1-m}}}%
\end{equation*}

\subsection{The Secretary Problem with a random number of applicants}
We now depart slightly from the classical setting by letting $N$ (the number of candidates) be a random variable uniform over $\left\{ 1,\ldots, n \right\}$, as in \cite{ferguson,sonin,ras}.

First, let $\mathfrak{M}_{n}(k)$ be the probability that, when rejecting a candidate in the $k$-th interview, there are still more available candidates. Also, let $P^{A}_{n}(k)$ be the probability of success when choosing, in the $k$-th interview, a candidate which is better than all the previous ones. Then, the following equalities hold:
\begin{itemize}
\item $\mathfrak{M}_{n}(0)=1$, and  for $k>0$:
\[
\mathfrak{M}_{n}(k)=\frac{n-k}{n-k+1}%
\]
\item Using the well-known digamma function $\psi$,
\[
P^{A}_{n}(k)=\frac{1}{n-k+1}\sum_{i=k}^{n}\frac{k}{i}=\frac{k (\psi
(n+1)-\psi(k))}{n-k+1}%
\]
\end{itemize}

On the other hand, let $F_{n}(k)$ be the probability of success when rejecting the $k$-th candidate and choosing, later on, the one which is better than all the previous ones. That is, the probability of success using the threshold strategy $k$ assuming that there are at least $k$ candidates. The following recurrence relations hold:
\begin{align*}
F_{n}(k)  &  =\mathfrak{M}_{n}(k)\frac{1}{k+1}P_{n}^{A}(k+1)+\mathfrak{M}%
_{n}(k)\frac{k}{k+1}F_{n}(k+1),\\
F_{n}(n)  &  =0.
\end{align*}

Finally, the prior probability of there being at least $k$ candidates (or what is
the same, the probability that the $k$-th interview can be reached) is
$L_n(k)=\frac{n-k+1}{n}$. As a consequence, the probability of
success using the threshold $k$ is given by
$$
P_n(k)= L_n(k) F_{n}(k)
$$
Obviously, $L_n(\lfloor n x\rfloor)$ converges
uniformly to the function $1-x$ in the interval $[0,1]$, so we just need to study the uniform convergence of
$F_n(\lfloor nx\rfloor)$.

To do so, the relevant data is summarized in Table 8.

\begin{table}[h]
\caption{Data for Random Secretary Problem.}%
\label{Mulrandom}
\begin{tabular}
[c]{|c|c|c|c|c|c|c|}\hline
$\mu$ & $G_{n}(k)$ & $g_{n}(x)$ & $g(x)$ & $H_{n}(k)$ & $h_{n}(x)$ & $h(x)$ \\\hline
$0$ & $\frac{\mathfrak{M}_{n}(k)P_{n}^{A}(k+1)}{k+1}$ & $\frac{\mathfrak{M}_{n}(\lfloor
nx\rfloor)P_{n}^{A}(\lfloor nx\rfloor+1)}{\lfloor nx\rfloor+1}$ & $\frac
{\log(x)}{x-1}$ & $\frac{\mathfrak{M}_{n}(k)k}{k+1}$ & $ n\left(1-\frac{\mathfrak{M}_{n}(\lfloor nx\rfloor)\lfloor nx\rfloor}{\lfloor nx\rfloor+1}\right)$ &$\frac{1}{x-x^2}$\\\hline
\end{tabular}
\end{table}

The corresponding IVP is:
\[
\left\{
\begin{array}
[c]{c}%
y^{\prime}=\displaystyle{\frac{y}{x-x^{2}}}-\displaystyle{\frac{\log(x)}{x-1}}\\
y(1)=0\text{ \ \ \ \ \ \ \ \ \ \ \ \ \ \ \ }%
\end{array}
\right.
\]

Note that this differential equation is singular at the initial condition $x=1,y=0$. From a formal point of view, the function
\[
f(x)=-\frac{x \log^{2}(x)}{2 (x-1)},\,\,\, f(0)=f(1)=0.
\]
satisfies the differential equation in $(0,1)$, and is in fact continuous in $[0,1]$. Hence, we need to verify that the conditions of Theorem \ref{main} hold for it. Conditions (3i) and (3ii) are obvious. Regarding condition (3iii) we observe that
$$
V_n(k)=\frac{k n (n-k) \left(H_n-H_{k-1}\right)}{(k+1)
   (-k+n+1)^2}-\frac{n (k+n+1) \log
   ^2\left(\frac{k+1}{n}\right)}{2 (k-n-1)
   (k-n+1)^2}-\frac{n \log
   \left(\frac{k+1}{n}\right)}{k-n+1}< \frac{1}{k}
$$
while for condition (3iv), from
\[
f^{\prime\prime}(x)=\frac{(x-\log(x)-1) (-x+x \log(x)+1)}{(x-1)^{3} x}%
\]
it follows that:
\[
M_{n}(k)=\left|  f^{\prime\prime}\left(  \frac{k+1}{n}\right)  \right|
\leq\frac{n}{ k}.
\]
As a consequence, $F_{n}(\lfloor nx\rfloor)$ converges uniformly to $f(x)$
on $[0,1]$ and, uniformly in $[0,1]$, we have that
\[
\lim_{n}P_{n}(\lfloor nx\rfloor)=\lim
_{n}L_{n}(\lfloor nx\rfloor) \lim_{n} F_{n}(\lfloor nx\rfloor)=(1-x)f(x)=\frac
{x\log^{2}(x)}{2}%
\]
Moreover, the maximum of this function in $[0,1]$ is reached at $x=e^{-2}$, so Proposition \ref{conv} gives the know results from \cite{sonin,ras}

\begin{equation*}
  \lim_{n}\frac{\kappa_{n}}{n}  =e^{-2}=0.1353352...,
  \,\,\,
\lim_{n}\mathbf{P}_{n} =\mathbf{P}(e^{-2})=2e^{-2}=0.27067056...
\end{equation*}

\section{Four original examples}

We now devise four original examples in which our technique works straightforwardly. The first one is a lottery in which the winning payoff increases at each stage, but which may end up with no prize at all. The three remaining ones are new versions of the Secretary Problem not considered in the literature so far.

\subsection{Lotteries with increasing winning payoff}
There are $n$ balls in an urn, only one of which is white. The game has $n$ identical stages in which a ball is randomly drawn from the urn and a decision is taken:
\begin{itemize}
\item If the ball is black, it is returned and the player proceeds to the next stage.
\item If the ball is white at the $k$-th stage, the player can choose between ending the game with a payoff $Y(k/n)$ (where $Y(x)$ is a function defined in $[0,1]$), or returning it to the urn and proceed to the next stage.
 \item The game ends at the end of the $n$-th stage.
\end{itemize}

Let $P_{n}^{R}(k)$ be the expectation of winning after ending the $k$-th stage, when following  the optimal strategy. As we mentioned in the Introduction, whatever this strategy is, the expectation of winning following it is $P^R_n(0)$. The functions $P_{n}^{R}(k)$ satisfy the recurrence:%
\begin{align*}
P_{n}^{R}(k)&=\frac{1}{n}\max\left\{  Y\left(  \frac{k+1}{n}\right)  ,P_{n}%
^{R}(k+1)\right\}  +\frac{n-1}{n}P_{n}^{R}(k+1),\\
P_{n}^{R}(n)&=0.
\end{align*}

If the payoff function $Y(x)$ is non-decreasing, it can be easily seen that the optimal strategy is threshold. In is described in the following proposition.

\begin{prop}
In the previous setting let us assume that the payoff function $Y(x)$ is non-decreasing. Then, for all $n$, there exists $\kappa_n$ such that the optimal strategy consists in stopping whenever a white ball appears after the $\kappa_n$-th stage and rejecting it before that stage.
\end{prop}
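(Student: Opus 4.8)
The plan is to exhibit the threshold as the unique sign change of the difference between the immediate reward and the continuation value, by showing these two quantities have opposite monotonicities. Write $W(k):=P_n^R(k)$ for the continuation value (the optimal expectation with stages $k+1,\dots,n$ still to be played), and note that, by the dynamic programming principle encoded in the recurrence, the optimal action upon seeing a white ball at stage $j$ (with $1\le j\le n$) is to stop precisely when the immediate payoff dominates the continuation value, i.e.\ when $Y(j/n)\ge W(j)$. Thus it suffices to prove that the stopping set $\{j:\,Y(j/n)\ge W(j)\}$ is an up-set of $\{1,\dots,n\}$.

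First I would establish that $W$ is non-increasing. Using the identity $\max\{a,b\}=b+(a-b)^+$, the defining recurrence rewrites as
\[
W(k)=W(k+1)+\frac{1}{n}\bigl(Y((k+1)/n)-W(k+1)\bigr)^{+},
\]
so that
\[
W(k)-W(k+1)=\frac{1}{n}\bigl(Y((k+1)/n)-W(k+1)\bigr)^{+}\ge 0
\]
for every $k$. Hence $P_n^R$ is non-increasing on $\{0,\dots,n\}$. This is the only step requiring any genuine argument, and it is immediate once the recurrence is put in the above form; intuitively it merely states that having more stages left cannot decrease the optimal expectation.

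Next I would combine this with the monotonicity of the reward. Since $Y$ is non-decreasing and $j\mapsto j/n$ is increasing, the map $j\mapsto Y(j/n)$ is non-decreasing; together with the fact that $-W(j)$ is non-decreasing, the difference $D(j):=Y(j/n)-W(j)$ is non-decreasing in $j$. Consequently the set $\{j:\,D(j)\ge 0\}$ is an up-set, i.e.\ of the form $\{\kappa_n+1,\dots,n\}$ for a suitable $\kappa_n\in\{0,\dots,n\}$ (with the convention $\kappa_n=n$ when the set is empty). This is exactly the claimed threshold: one rejects every white ball appearing at a stage $\le\kappa_n$ and accepts the first white ball appearing at a stage $>\kappa_n$.

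The main—indeed essentially the only—obstacle is the monotonicity of $P_n^R$, and I expect no difficulty there beyond the algebraic rewriting above. The remaining edge cases (the stopping region being empty or all of $\{1,\dots,n\}$) are handled simply by the choice of $\kappa_n$, and the identification of the optimal action with the comparison $Y(j/n)\ge W(j)$ is a direct reading of the $\max$ in the recurrence, so the whole argument reduces to pairing two elementary monotonicities.
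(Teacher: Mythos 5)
Your proof is correct. Note that the paper itself offers no proof of this proposition: it merely remarks that ``it can be easily seen that the optimal strategy is threshold'' and then states the result, so your argument supplies an argument the authors omit. Your two ingredients are exactly the right ones. Rewriting the Bellman recurrence via $\max\{a,b\}=b+(a-b)^{+}$ as $W(k)=W(k+1)+\frac{1}{n}\bigl(Y((k+1)/n)-W(k+1)\bigr)^{+}$ shows that $W=P_n^R$ is non-increasing --- and, as you observe, this holds for \emph{any} payoff $Y$, monotone or not, since it only reflects that the player may always ignore stage $k+1$; the hypothesis that $Y$ is non-decreasing is then used only to make $j\mapsto Y(j/n)$ non-decreasing, so that $D(j)=Y(j/n)-W(j)$ is non-decreasing and the stopping region $\{j:\,D(j)\ge 0\}$ is an up-set $\{\kappa_n+1,\dots,n\}$, which is precisely the threshold structure claimed. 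The identification of the optimal action with the sign of $D(j)$ is indeed a direct reading of the $\max$ in the recurrence, and your conventions for the degenerate cases (empty stopping region, ties) are consistent with the statement, so the argument is complete.
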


Now, let $F_{n}(k)$ be the expected payoff when using a threshold strategy of threshold $k$. These functions satisfy the recurrence relation:
\begin{align*}
F_{n}(k)  &  =\frac{1}{n}Y\left(  \frac{k+1}{n}\right)  +\frac{n-1}{n}%
F_{n}(k+1),\\
F_{n}(n)  &  =0.
\end{align*}

The relevant data for this game is summarized in Table \ref{juegoI}.
\begin{table}[h]
\caption{Data for the lottery with increasing payoff.}%
\label{juegoI}
\begin{tabular}
[c]{|c|c|c|c|c|c|c|c|c|}\hline
$\mu$ & $G_{n}(k)$ & $g_{n}(x)$ & $g(x)$ & $H_{n}(k)$ & $h_{n}(x)$ & $h(x)$ \\\hline
$0$ & $\frac{1}{n}Y\left(  \frac{k+1}{n}\right)  $ & $Y\left(  \frac{\lfloor
nx\rfloor+1}{n}\right)  $ & $Y\left(  x\right)  $ & $\frac{n-1}{n}$ & $1$ &
$1$  \\\hline
\end{tabular}
\end{table}

Consequently, we must solve the IVP
\[
\left\{
\begin{array}
[c]{c}%
y^{\prime}=y-Y(x)\\
\multicolumn{1}{l}{y(1)=0}%
\end{array}
\right.
\]
Assuming that $Y(x)$ is Lipschitz in $[0,1]$, its solution is given by
\[
f(x)=e^{x}\int_{x}^{1}e^{-u}Y(u)\,du
\]
In order to apply Theorem \ref{main}, note that condition (3iii) holds because
$$\frac{V_{n}(k)}{n}=\frac{Y\left(  \frac{k}{n}\right)  -Y\left(  \frac{k+1}%
{n}\right)  }{n}$$
so that, $Y(x)$ being Lipschitz, it follows that
$$\sum_{k=1}^{n-2} \frac{V_{n}(k)}{n} = \frac{Y\left(  \frac{1} {n}\right)- Y\left(  \frac{n-1} {n}\right)}{n} \longrightarrow 0.$$
Also, condition (3iv) is satisfied because $f''$ is bounded in $[0,1]$, since:
\[
f''(x)=f(x)-Y(x)-Y'(x)
\]

Thus, due to Theorem \ref{main} $F_{n}(\lfloor nx\rfloor)$ converges uniformly to $f(x)$ in $[0,1]$. Moreover, if $\vartheta$ is the unique solution of $f(x)=Y(x)$ we have that $f'(\vartheta)=0$, $f''(\vartheta)=-Y'(\vartheta)<0$, and by Proposition \ref{conv}:
\begin{align*}
&  \lim_{n}\frac{\kappa_{n}}{n}=\vartheta\\
&  \lim_{n}\mathbf{P}_{n}=Y(\vartheta)
\end{align*}

\begin{ejem}
Let us consider the payoff function $Y(x)=x$. Then, it follows that $f(x)=x-2 e^{x-1}+1$. If we set $n=10^7$ it can be directly computed using the dynamic program that $\kappa_n=3068528$, and $\mathbf{P}_{n}= 0.3068528540974\dots$.

Now, in this case, and according to our previous discussion $\lim_{n}\frac{\kappa_{n}}{n}=\vartheta=\lim_{n}\mathbf{P}_{n}$ where $\vartheta=1-\log 2=0.30685281944005\dots$ is the unique solution to $x-2^{x-1}+1=x$.
\end{ejem}

\subsection{Secretary problem with a wildcard}
There are $n+1$ balls in an urn: $n$ of them are ranked from $1$ to $n$, and the other one is a wildcard. At each stage of the game, a ball is extracted. The rank of each ball is known only when it is extracted. The player decides according to the following scheme:
\begin{itemize}
\item If the ball is the wildcard, he can stop the game and get a payoff of $1/2$, or he can decide to continue the game discarding the wildcard (i.e. it is not returned to the urn).
\item Otherwise, the player can either stop the game, in which case he wins $1$ if the ball is the best, and $0$ otherwise; or he can discard the ball and continue the process.
\end{itemize}
Thus, once the wildcard is rejected, the game goes on according to the rules of the classical secretary problem.

Let $E_n(k)$ be the expected payoff when rejecting the $k$-th ball if the wildcard has not appeared in the $k-1$ previous extractions. This $E_n(k)$ satisfies the following recurrence (dynamic program), where, as usual, $\textbf{P}_n=E_n(0)$ is the expected payoff using the optimal strategy.
\begin{small}
\begin{align*}
E_n(k)   = &\frac{1/2}{n-k+1}+\frac{n-k}{n-k+1}\cdot\frac{1}{k+1}\cdot\max
             \left\{\frac{k+1}{n},E(k+1)\right\}+\\
  &+ \frac{n-k}{n-k+1}\cdot\frac{k}{k+1}\cdot E_n(k+1);\\
  E_n(n)  =& \frac{1}{2}
\end{align*}
\end{small}

The optimal strategy in this game is a threshold strategy, as we see in the following result.

\begin{prop}
  \label{ju} For each $n>1$ there is $\kappa_n$ such that the following strategy is optimal:
  \begin{enumerate}
  \item Stop the game whenever the wildcard is extracted. Otherwise,
  \item Before the $\kappa_n$-th extraction continue the game, and
  \item From the $\kappa_n$-th extraction on, choose any ball which is better than the previous ones (or is the wildcard, obviously).
  \end{enumerate}
\end{prop}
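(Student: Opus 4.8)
The plan is to treat the two assertions of the proposition separately: that stopping at the wildcard is always optimal (item 1), and that the acceptance decisions on the ranked balls are governed by a single threshold (items 2 and 3). I would dispose of the wildcard first, since it is exactly what legitimises writing the bare $1/2$, with no $\max$, in the recurrence for $E_n$.

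For the wildcard, the key remark is that discarding it reduces the remainder of the game to a genuine classical secretary problem on the $n$ ranked balls. Indeed, if the wildcard is extracted at the $(k+1)$-th step, we have seen exactly $k$ ranked balls, and the value of continuing optimally is the classical continuation value $E^{\mathrm{cl}}(k)$ of the classical secretary problem. Every such continuation value is bounded above by the overall optimal success probability $V_n$ of that problem, and it is a standard fact that $V_n\le 1/2$ precisely when $n\ge 2$. Thus the guaranteed payoff $1/2$ is always at least as good as continuing; this proves item 1 and, simultaneously, justifies the recurrence defining $E_n$. It is worth noting that this is exactly where the hypothesis $n>1$ is used.

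For the threshold I would analyse the sign of $D_n(j):=\frac{j}{n}-E_n(j)$, since accepting a best-so-far ranked ball at step $j$ is optimal exactly when $D_n(j)\ge 0$. The main obstacle is that $E_n$ is \emph{not} monotone: the wildcard term $\frac{1/2}{n-j+1}$ grows as $j\to n$, and in fact $E_n(n-1)=\frac{2n+1}{4n}>\frac12=E_n(n)$, so one cannot simply argue that $D_n$ is increasing. Instead I would extract from the recurrence two one-step implications by substituting the relevant branch of the $\max$ and comparing with $\frac{j}{n}$; in both cases the comparison collapses to the sign of $\frac{3n}{2}-2j$. This yields: (F1) for $j\le 3n/4$, continuing at $j+1$ forces continuing at $j$; and (F2) for $j\ge 3n/4$, stopping at $j+1$ forces stopping at $j$.

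These two facts assemble into the threshold. Since $E_n(n)=\frac12<1=\frac{n}{n}$, stopping is optimal at $n$; inducting downward with (F2) shows stopping is optimal for every $j\ge 3n/4$. Fact (F1) then shows that on $\{0,\dots,\lfloor 3n/4\rfloor\}$ the acceptance set is upward closed, and since acceptance already holds at and above $3n/4$, the full acceptance set is an interval $\{\kappa_n,\dots,n\}$, which is precisely the asserted threshold strategy. What remains is routine: the elementary algebra behind (F1) and (F2), the index reconciliation between the sign change of $D_n$ and the $\kappa_n$ of the statement, and a direct check of the handful of small values of $n$ where the ranges in (F1)--(F2) overlap.
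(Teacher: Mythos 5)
Your proof is correct, but it takes a genuinely different---and in fact more careful---route than the paper. The paper's own proof consists of two short claims: (a) the wildcard must always be taken, because discarding it leaves the classical secretary problem whose value never exceeds $1/2$; and (b) ``$E_n(k)$ is trivially non-increasing in $k$,'' whence $E_n(k)\le k/n$ implies $E_n(k+1)\le E_n(k)\le k/n\le (k+1)/n$, which gives the threshold. Your wildcard argument is essentially (a), including the correct observation that this is where $n>1$ enters. Your objection to (b), however, is well taken: the monotonicity claim in the paper is false. Your computation $E_n(n-1)=\frac{2n+1}{4n}>\frac12=E_n(n)$ is exact, and the failure is not even confined to the last step (for $n=2$ one finds $E_2(0)=\frac{7}{12}<E_2(1)=\frac58$), so the paper's justification of the threshold structure has a real gap which your argument repairs. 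I verified your key algebra: substituting the relevant branch of the $\max$ into the recurrence, both one-step implications collapse to the sign of $\frac{3n}{2}-2j$. For instance, under $E_n(j+1)\ge\frac{j+1}{n}$ one gets $E_n(j)\ge \frac{1}{n-j+1}\bigl(\frac12+\frac{(n-j)(j+1)}{n}\bigr)$, and the comparison with $\frac{j}{n}$ reduces to $\frac{n}{2}+(n-j)(j+1)\ge j(n-j+1)$, i.e.\ $2j\le\frac{3n}{2}$; the dual estimate gives (F2) for $2j\ge\frac{3n}{2}$. The assembly---downward induction from $j=n$ via (F2) to cover all $j\ge 3n/4$, then upward closedness of the acceptance set via (F1) below $3n/4$---is sound, modulo the small-$n$ checks you flag (the downward induction needs $n-1\ge 3n/4$, i.e.\ $n\ge 4$, to get started). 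As for what each approach buys: the paper's argument would be a one-line proof if the monotonicity held, but it does not; your argument costs some algebra but works directly from the recurrence, is robust to the non-monotonicity caused by the growing wildcard term $\frac{1/2}{n-j+1}$, and as a by-product shows the acceptance region always contains $[3n/4,\,n]$, which is consistent with the asymptotic threshold $\kappa_n\approx 0.5456\,n$ obtained later in the paper.
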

\begin{proof}
Certainly, if the wildcard in encountered, it must always be chosen because if it is discarded we are in the classical secretary problem in wich the expected payoff is always smaller than $1/2$ and there is no value in continuing with the process.

On the other hand, the function $E_n(k)$ is trivially non-increasing in $k$. This implies that if, for a specific $k$, the optimal decision is to stop with any ball better than the previous ones, then the same holds for all values greater than $k$. In other words,
$$E_n(k)\leq k/n \Longrightarrow E_n(k+1)\leq  \frac{k+1}{n}$$
and this finishes the proof.
\end{proof}

Let now $F_n(k)$ be the expected payoff following a strategy that consists in rejecting the $k$-th ball and then choosing either the wildcard or the first ball which is better than the previous ones. Thus,
\begin{align*}
F_{n}(k)  &  =
\frac{3 n-2k}{2 n (n-k+1)}+\frac{k (n-k)}{(k+1)
   (n-k+1)}F_{n}(k+1),\\
F_{n}(n)  &  =1/2.
\end{align*}

\begin{table}[h]
\caption{Data for the wildcard game.}\label{juegoII}%
\begin{tabular}
[c]{|c|c|c|c|c|c|c|c|c|}\hline
$\mu$ & $G_{n}(k)$ & $g_{n}(x)$ & $g(x)$ & $H_{n}(k)$ & $h_{n}(x)$ & $h(x)$ \\\hline
$\frac{1}{2}$ & $\frac{3n-2k}{2n(n-k+1)}$ & $\frac{3n-2\lfloor nx\rfloor
}{2(n-\lfloor nx\rfloor+1)}$ & $\frac{3-2x}{2-2x}$ & $\frac{(n-k)k}%
{(n-k+1)(k+1)}$ & $\frac{n(n+1)}{(n-\lfloor nx\rfloor+1)(\lfloor nx\rfloor
+1)}$ & $\frac{1}{x-x^{2}}$  \\\hline
\end{tabular}
\end{table}

Table \ref{juegoII} contains the relevant data and the IVP is:
\[
\left\{
\begin{array}
[c]{c}%
y'=\displaystyle\frac{y}{x-x^{2}}-\frac{3-2x}{2-2x}\\[1em]
\multicolumn{1}{l}{y(1)=1/2}%
\end{array}
\right.
\]
which, despite the singularity at $x=1$, has the unique solution (continuous in $[0,1]$):
\[
f(x)=\frac{-2x^{2}+2x+3x\log(x)}{2(x-1)}%
\]
Condition (3iii) of Theorem \ref{main} holds because
$$
V_n(k)= \frac{3 n \left((k+n+1) \log \left(\frac{k+1}{n}\right)-2
    (k-n+1)\right)}{2 (k-n-1) (k-n+1)^2}<\frac{1}{k}
$$
while condition (3iv) also holds because:
$$
\left|f''(x)\right|
=  \left|-\frac{3 \left(x^2-2 x \log (x)-1\right)}{2 (x-1)^3 x}\right|
$$
is an decreasing function and
$$
  |M_n(k)|= - f''(k/n)=\frac{3 n^2 \left(-k^2+2 k n \log
      \left(\frac{k}{n}\right)+n^2\right)}{2 k (k-n)^3}<\frac{n}{k}
$$
Hence, we conclude that $F_{n}(\lfloor nx\rfloor)$ converges uniformly to $f(x)$ in $[0,1]$ and we have the following

\begin{prop}
  $$
  \lim_n \frac{\kappa_n}{n} =-\frac{3}{4} W\left(  -\frac{4}{3 e^{4/3}%
    }\right)  = 0.545605016560\dots
  $$
\end{prop}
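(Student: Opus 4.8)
The plan is to invoke Proposition \ref{nuevaa} rather than Proposition \ref{conv}: the relevant $\theta$ here is \emph{not} the maximiser of $f$ but the point at which continuing ceases to be preferable to accepting a current record. Concretely, by Proposition \ref{ju} together with the characterisation of $\kappa_n$ recalled at the beginning of Section 3, writing $Q_n(k):=P^{(n)}_{k}(1)$ for the payoff of accepting a record ball at step $k$, the threshold satisfies $Q_n(\kappa_n)<F_n(\kappa_n)$ and $Q_n(\kappa_n+i)\ge F_n(\kappa_n+i)$ for $1\le i\le n-\kappa_n$. These are exactly the hypotheses of Proposition \ref{nuevaa} with $\mathcal{N}(n)=\kappa_n$, so it suffices to identify the uniform limit of $Q_n$ and the unique sign change of $q-f$.

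First I would record that accepting a record at step $k$ succeeds with probability $k/n$, so that $Q_n(\lfloor nx\rfloor)\to q(x)=x$ uniformly on $[0,1]$ by Lemma \ref{lem1p}; we already know $F_n(\lfloor nx\rfloor)\to f(x)$ uniformly, with
\[
f(x)=\frac{-2x^{2}+2x+3x\log(x)}{2(x-1)}.
\]
It then remains to locate where $q(x)-f(x)=x-f(x)$ changes sign. A short manipulation yields
\[
x-f(x)=\frac{x\bigl(3\log x-4(x-1)\bigr)}{2(1-x)},
\]
so on $(0,1)$ the sign of $x-f(x)$ equals the sign of $\phi(x):=3\log x-4(x-1)$.

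The main, but elementary, step is the sign analysis of $\phi$. Since $\phi'(x)=3/x-4$ vanishes only at $x=3/4$, the function $\phi$ increases on $(0,3/4)$ and decreases on $(3/4,1)$; combined with $\phi(x)\to-\infty$ as $x\to 0^{+}$, $\phi(3/4)>0$ and $\phi(1)=0$, this forces a single interior zero $\theta\in(0,3/4)$, with $\phi<0$ on $(0,\theta)$ and $\phi>0$ on $(\theta,1)$. Hence $x-f(x)$ changes sign exactly once in $(0,1)$, passing from negative to positive at $\theta$, and Proposition \ref{nuevaa} gives $\lim_n \kappa_n/n=\theta$.

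Finally I would solve $\phi(\theta)=0$, i.e.\ $3\log\theta=4(\theta-1)$, in closed form. Rewriting it as $\theta e^{-4\theta/3}=e^{-4/3}$ and multiplying by $-\tfrac43$ gives $-\tfrac{4\theta}{3}e^{-4\theta/3}=-\tfrac{4}{3e^{4/3}}$, whence $-\tfrac{4\theta}{3}=W\!\left(-\tfrac{4}{3e^{4/3}}\right)$ and $\theta=-\tfrac34 W\!\left(-\tfrac{4}{3e^{4/3}}\right)$. The one genuinely delicate point is the choice of branch: the argument $-\tfrac{4}{3e^{4/3}}$ lies in $(-1/e,0)$, so $W$ is two-valued there; the interior root $\theta\in(0,3/4)$ corresponds to $-\tfrac{4\theta}{3}\in(-1,0)$, i.e.\ the principal branch $W_0$, whereas the spurious root $\theta=1$ corresponds to $W_{-1}$. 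Selecting $W_0$ yields the stated value $0.545605016560\dots$.
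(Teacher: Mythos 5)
Your proposal is correct and follows essentially the same route as the paper: the paper's proof likewise notes that $F_n(\kappa_n)>\kappa_n/n$ while $F_n(\kappa_n+i)\leq(\kappa_n+i)/n$, invokes Proposition \ref{nuevaa} with $q(x)=x$, and appeals to the unique solution of $f(x)=x$ in $(0,1]$. Your write-up merely makes explicit what the paper leaves implicit — the monotonicity argument showing the sign change is unique and the Lambert $W$ branch selection — which is a faithful elaboration rather than a different method.
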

\begin{proof}
First of all, note that
  $$
  F_n(\kappa_n)>\frac{\kappa_n}{n}\textrm{ and } F_n(\kappa_n+i)\leq\frac{\kappa_n+i}{n}\textrm{ for all } i=1, ..., n-\kappa_n.
  $$
Consequently, the result follows from Proposition \ref{nuevaa}, and the fact that $f(x)=x$ has a single solution in $(0,1]$.
\end{proof}

We also have

\begin{prop}
Let $\vartheta=-\frac{3}{4} W\left(  -\frac{4}{3 e^{4/3}}\right)$. Then,
 $$
\lim_n \mathbf{P}_{n} =\frac{1}{2} \vartheta+ (1- \vartheta) \vartheta
=0.5207226907\dots
$$
\end{prop}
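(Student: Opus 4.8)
The plan is to reconstruct the optimal value $\mathbf{P}_n=E_n(0)$ from the threshold value $F_n(\kappa_n)$, whose asymptotics are already under control, by running the dynamic program for $E_n$ backwards through the two phases described in Proposition \ref{ju}. First I would record what the optimal policy does on either side of the threshold. At or beyond the threshold the optimal action is exactly the one defining $F_n$ (stop on the wildcard, or on the first ball beating all previous ones), so there $E_n$ and $F_n$ coincide; in particular $E_n(\kappa_n)=F_n(\kappa_n)$, up to an asymptotically irrelevant off-by-one in where acceptance begins. Below the threshold the optimal action is to reject every ordinary ball while still stopping on the wildcard, so in the recurrence for $E_n$ the factor $\max\{(k+1)/n,E_n(k+1)\}$ equals $E_n(k+1)$ and the two \emph{continue} branches merge.

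This collapses the subthreshold recurrence to the purely linear
\begin{equation*}
E_n(k)=\frac{1}{2(n-k+1)}+\frac{n-k}{n-k+1}\,E_n(k+1),\qquad 0\le k<\kappa_n,
\end{equation*}
which I would then unroll from $\kappa_n$ down to $0$. The accumulated products telescope, since $\prod_{i=0}^{j-1}\frac{n-i}{n-i+1}=\frac{n-j+1}{n+1}$, and each summand thereby collapses to the constant $\frac{1}{2(n+1)}$. The outcome is the exact identity
\begin{equation*}
\mathbf{P}_n=E_n(0)=\frac{\kappa_n}{2(n+1)}+\frac{n-\kappa_n+1}{n+1}\,F_n(\kappa_n),
\end{equation*}
whose two terms read as \emph{the wildcard is met before the threshold} (probability $\approx\kappa_n/(n+1)$, payoff $\tfrac12$) plus \emph{the wildcard is not met early, after which the threshold strategy is followed}.

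It only remains to pass to the limit. From the preceding proposition $\kappa_n/n\to\vartheta$, hence $\frac{\kappa_n}{2(n+1)}\to\tfrac12\vartheta$ and $\frac{n-\kappa_n+1}{n+1}\to 1-\vartheta$. For the last factor I would write $F_n(\kappa_n)=f_n(\kappa_n/n)$ and combine the uniform convergence $f_n\to f$ with the continuity of $f$ and $\kappa_n/n\to\vartheta$ to obtain $F_n(\kappa_n)\to f(\vartheta)$; finally $f(\vartheta)=\vartheta$ because $\vartheta$ was characterized as the unique solution of $f(x)=x$ in $(0,1]$. Assembling the pieces yields $\lim_n\mathbf{P}_n=\tfrac12\vartheta+(1-\vartheta)\vartheta$, as stated.

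The main obstacle is conceptual rather than computational: unlike the problems of Section 3, here $\mathbf{P}_n$ is \emph{not} equal to $F_n(\kappa_n)$, because the optimal policy already secures payoff $\tfrac12$ whenever the wildcard surfaces during the initial rejection phase. The crux is therefore to identify the correct decomposition and to carry out the telescoping that produces the precise coefficients $\frac{\kappa_n}{2(n+1)}$ and $\frac{n-\kappa_n+1}{n+1}$; once this identity is in hand, the limit follows immediately from the convergence results already established.
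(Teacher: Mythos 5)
Your proof is correct and, at its core, coincides with the paper's: both rest on the exact identity $\mathbf{P}_n=\tfrac{1}{2}\,\tfrac{\kappa_n}{n+1}+\bigl(1-\tfrac{\kappa_n}{n+1}\bigr)F_n(\kappa_n)$ (your coefficient $\frac{n-\kappa_n+1}{n+1}$ is exactly $1-\frac{\kappa_n}{n+1}$), followed by the same limit passage: $\kappa_n/n\to\vartheta$ from the preceding proposition, $F_n(\kappa_n)\to f(\vartheta)$ by uniform convergence and continuity, and $f(\vartheta)=\vartheta$ since $\vartheta$ is the unique root of $f(x)=x$ in $(0,1]$. The only real difference is how the identity is obtained: the paper gets it in one line by probabilistic conditioning --- the wildcard is uniformly distributed over the $n+1$ positions, so the probability of reaching step $\kappa_n$ without meeting it is $1-\frac{\kappa_n}{n+1}$, while in the complementary event the optimal policy stops on it for payoff $\tfrac12$ --- whereas you derive it algebraically by collapsing the subthreshold Bellman recursion (using Proposition \ref{ju} to remove the $\max$) and telescoping the products $\prod_{i=0}^{j-1}\frac{n-i}{n-i+1}=\frac{n-j+1}{n+1}$, so that every summand becomes $\frac{1}{2(n+1)}$. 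Your route is longer but more self-contained, deriving the coefficients directly from the stated dynamic program and making explicit the off-by-one in the threshold convention that the paper silently glosses over; the paper's conditioning argument is shorter but leans on a probabilistic ``clearly.'' Both are valid, and they meet at literally the same formula.
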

\begin{proof}
The probability of reaching step $\kappa_n$ without having extracted the wildcard is clearly
$1-\frac{k_{n}}{n+1}$. As a consequence,
 \[
\mathbf{P}_{n}=\frac{1}{2} \frac{\kappa_{n}}{n+1}+ \left(1-\frac{\kappa_{n}}{n+1}\right)F_n(k_{n}).
\]
Then, the result follows because $\lim_n F_n(\kappa_n) =f(\vartheta)=\vartheta$, and $\frac{\kappa_n}{n}\longrightarrow \vartheta$ due to the previous proposition.
\end{proof}

\begin{rem}
These results seem to be accurate. In fact, for $n=10^7$ we obtain following the values using directly the dynamic program:
\begin{align*}
\mathbf{P}_{10^7}&=0.520722700032\dots\\
\kappa_{10^7}&=5456050.
\end{align*}
\end{rem}

\subsection{Secretary problem with random interruption}
There are $n$ ranked balls (from $1$ to $n$) in an urn. At each stage
of the game, a ball is extracted. The rank of each ball is known only
when it is extracted. The game is the classical secretary game with
the modification that at each stage, a random event with probability
$1/n$ decides whether the game stops without payoff or continues
(e.gr. the ball may ``blow up'' and end the game with probability
$1/n$).

The probability of success (i.e. choosing the best ball) using the
optimal strategy (whatever this might be) is $\overline{F}_n(0)$, and
can be computed by means of the following dynamic program, where
$\overline{F}_n(k)$ is the probability of success after rejecting the
$k$-th ball and following the optimal strategy from that point on:
\begin{align*}
  \overline{F}_n(k)&=\left(1-\frac{1}{n}\right) \left(\frac{\max
      \left(\frac{k+1}{n},\overline{F}_n(k+1)\right)}{k+1} +
    \frac{k
      \overline{F}_n(k+1)}{k+1}\right)\\
			\overline{F}_n(n)&=0.
\end{align*}

It is easy to see that the optimal strategy is threshold. Let now $F_n(k)$ be the probability of success after rejecting the $k$-th ball, and then choosing the first ball which is better than the all the previous ones. The following recurrence holds:
\begin{align*}
F_n(k)&= \frac{n-1}{n^2}+ \frac{k (n-1)}{(k+1) n}F_n(k+1),\\
F_n(n)&=0.
\end{align*}

The data for this game is summarized in Table \ref{juegoIII}
\begin{table}[h]
\caption{Data for Game III.}%
\label{juegoIII}%
\begin{tabular}
[c]{|c|c|c|c|c|c|c|c|c|}\hline
$\mu$ & $G_{n}(k)$ & $g_{n}(x)$ & $g(x)$ & $H_{n}(k)$ & $h_{n}(x)$ & $h(x)$
 \\\hline
$0$ & $\frac{n-1}{n^2}$ & $ \frac{n-1}{n}$ & $1$ & $\frac{k (n-1)}{(k+1) n}$ & $\frac{\lfloor n x \rfloor +n}{\lfloor n x \rfloor+1}$ & $1+\frac{1}{x}$   \\\hline
\end{tabular}
\end{table}

In this case, the IVP to be solved is
$$
\left\{
\begin{array}
[c]{l}%
  y'=\displaystyle\left(\frac{1}{x}+1\right) y(x)-1\\
  y(1) = 0
\end{array}
\right.
$$

whose solution is
$$
f(x)=e^x x (\text{Ei}(-1)-\text{Ei}(-x)),
$$
where $\text{Ei}(x)$ is the so-called exponential integral function.
$$
\text{Ei}(x)=  \int_{-\infty}^{x}\frac{e^t}{t} dt,
$$
and we extend $f(x)$ to $0$ by continuity as $f(0)=0$.

Condition (3iii)  of Theorem \ref{main} holds because
$$
V_n(k)= \frac{e^{\frac{k+1}{n}}
  \left(\text{Ei}(-1)-\text{Ei}\left(-\frac{k+1}{n}\right)\right)-1}{n}< \frac{1}{k}
$$
while condition (3iv) holds because
$$
f''(x)=e^x (x+2) (\text{Ei}(-1)-\text{Ei}(-x))-\frac{x+1}{x}
$$
and
$$
\sum_{k=1}^{n-2} M_n(k)<\frac{\log(n)}{n}.
$$

Hence, we conclude that $F_{n}(\lfloor nx\rfloor)$ converges uniformly to $f(x)$ in $[0,1]$ and we have the following

\begin{prop}
Let  $\vartheta=0.27105459032\dots$ be the only solution in $(0,1]$ of
  $$
  e^{-x} =\int_{-x}^{-1} \frac{e^t}{t}dt.
  $$
Then,
\begin{align*}
\lim_{n}\frac{\kappa_{n}}{n}  &  =\vartheta \\
\lim_{n}\mathbf{P}_{n}  &  =f( \vartheta) e^{-\vartheta}=0.2066994179096392\dots
\end{align*}
\end{prop}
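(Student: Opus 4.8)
The plan is to locate the asymptotic threshold through Proposition \ref{nuevaa} and then to compute the limiting payoff by inserting the probability that the game survives (is not interrupted) up to the threshold. Throughout I rely on the already-established fact that $F_{n}(\lfloor nx\rfloor)$ converges uniformly to $f$ on $[0,1]$.

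For the threshold, recall that in this problem the payoff of choosing a best-so-far ball at stage $k$ is $P^{(n)}_{k}(1)=k/n$, and that $\kappa_{n}$ is the optimal stopping threshold. Since the threshold strategy coincides with the optimal one from stage $\kappa_{n}$ onward, the characterizing property of $\kappa_{n}$ reads
\[
F_{n}(\kappa_{n})>\frac{\kappa_{n}}{n},\qquad F_{n}(\kappa_{n}+i)\leq\frac{\kappa_{n}+i}{n}\ \text{ for } i=1,\dots,n-\kappa_{n}.
\]
Setting $Q_{n}(k)=k/n$, so that $q_{n}(x)=Q_{n}(\lfloor nx\rfloor)$ converges uniformly to $q(x)=x$, it remains to check that $q(x)-f(x)=x-f(x)$ changes sign at a single $\vartheta\in(0,1]$. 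Writing $\phi(x)=f(x)/x=e^{x}(\text{Ei}(-1)-\text{Ei}(-x))$, the equation $f(x)=x$ becomes $\phi(x)=1$, i.e. $e^{-x}=\text{Ei}(-1)-\text{Ei}(-x)=\int_{-x}^{-1}\frac{e^{t}}{t}\,dt$, which is exactly the equation defining $\vartheta$.

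The uniqueness of $\vartheta$ is the main analytic point, and I would settle it via a differential identity rather than by brute estimates. Using $\frac{d}{dx}\text{Ei}(-x)=e^{-x}/x$ one finds
\[
\phi'(x)=\phi(x)-\frac{1}{x},
\]
so at any root of $\phi(x)=1$ we have $\phi'(x)=1-\frac1x<0$ on $(0,1)$; thus $\phi$ can cross the level $1$ only downward. Combined with $\phi(x)\to+\infty$ as $x\to0^{+}$ (since $f(x)\sim-x\log x$ there) and $\phi(1)=0$, the intermediate value theorem gives exactly one crossing. Hence $x-f(x)=x(1-\phi(x))$ is negative on $(0,\vartheta)$ and positive on $(\vartheta,1]$, the hypotheses of Proposition \ref{nuevaa} are met, and $\lim_{n}\kappa_{n}/n=\vartheta$ follows.

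For the payoff, the threshold strategy succeeds only if the game is not interrupted during the first $\kappa_{n}$ stages, an event of probability $(1-1/n)^{\kappa_{n}}$, and, conditionally on that, with probability $F_{n}(\kappa_{n})$; exactly as in the random-applicants subsection (where $\mathbf{P}_{n}=L_{n}F_{n}$) this yields
\[
\mathbf{P}_{n}=\Big(1-\tfrac1n\Big)^{\kappa_{n}}F_{n}(\kappa_{n}).
\]
Since $\kappa_{n}/n\to\vartheta$, we get $(1-1/n)^{\kappa_{n}}=\big((1-1/n)^{n}\big)^{\kappa_{n}/n}\to e^{-\vartheta}$, while uniform convergence of $F_{n}(\lfloor nx\rfloor)$ to the continuous function $f$ together with $\kappa_{n}/n\to\vartheta$ gives $F_{n}(\kappa_{n})\to f(\vartheta)$. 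Therefore $\lim_{n}\mathbf{P}_{n}=f(\vartheta)e^{-\vartheta}$, and since $f(\vartheta)=\vartheta$ by definition of $\vartheta$, the limit equals $\vartheta e^{-\vartheta}=0.2066994\dots$. I expect the only genuinely non-routine steps to be the uniqueness argument, handled cleanly through the identity $\phi'=\phi-1/x$, and the correct isolation of the survival factor $(1-1/n)^{\kappa_{n}}$ in the expression for $\mathbf{P}_{n}$; the remaining limits are immediate from the results already proved above.
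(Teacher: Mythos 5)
Your proof is correct and takes essentially the same approach as the paper: the threshold limit comes from Proposition \ref{nuevaa} applied with $Q_n(k)=k/n$, and the payoff limit from the factorization $\mathbf{P}_n=\left(1-\tfrac1n\right)^{\kappa_n}F_n(\kappa_n)$ with $\left(1-\tfrac1n\right)^{\kappa_n}\to e^{-\vartheta}$ and $F_n(\kappa_n)\to f(\vartheta)$. The one point where you go beyond the paper is welcome: the paper simply asserts that $\vartheta$ is the \emph{only} positive root of $f(x)=x$, whereas your identity $\phi'(x)=\phi(x)-\tfrac1x$ (forcing every crossing of the level $1$ to be downward on $(0,1)$) actually proves that uniqueness.
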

\begin{proof}
  First of all, note that $F_n(\kappa_n)>\frac{\kappa_n}{n}$ and
  $F_n(\kappa_n+i)\leq\frac{\kappa_n+i}{n}$ for all $i>\kappa_n$. So by Proposition \ref{nuevaa}, $\lim_{n}\frac{\kappa_{n}}{n}$ is the
  only positive root of $f(x)=x$, which is $\vartheta$.
	
	Now, in order to succeed
  using the optimal threshold $\kappa_n$, two successive independent
  events must take place:
  \begin{itemize}
  \item[A)] The $\kappa_n$-th extraction takes place and the game does not end because of
    the random event (i.e. the ball does not ``blow-up''). This happens with probability
    $\left(1- \frac{1}{n}\right)^{\kappa_n}$.
  \item[B)] The $\kappa_n$-th  ball is rejected and, after this rejection, the game ends
    successfully following the threshold strategy with threshold $\kappa_n$. This
    happens with probability $F_n(\kappa_n)$.
  \end{itemize}
Consequently,
  $$
  \mathbf{P}_n=\left(1- \frac{1}{n}\right)^{\kappa_n} F_n(\kappa_n)
  $$
 and the result follows because $\lim_n F_n(\kappa_n)\longrightarrow f(\vartheta)$, and
  $\lim_n \left(1-1/n\right)^{\kappa_n} = e^{-\vartheta}$.
\end{proof}

\begin{rem}
This proposition seems to be accurate. In fact, for $n=10^7$ we obtain following the values using directly the dynamic program:
\begin{align*}
  \mathbf{P}_{10^7}&=0.206699425033\dots,\\
  \kappa_{10^7}&=2710546.
\end{align*}
\end{rem}

\subsection{Secretary problem with penalty if the second best is selected}
This is an original variant in which, if the second best candidate is chosen, then a penalty is incurred. Success provides a payoff of $1$, whereas the penalty is $b\geq 0$. The most similar problem is studied by Gusein-Zade in \cite{gus}, where the aim is to choose the best or the second best candidate with respective payoffs $u_1$ and $u_2$, both greater than $0$. However, our case is not covered because $u_2$ would be $-b<0$.

Let $S_n(k)$ be the probability that, a candidate which is the second best up to the $k$-th interview turns out to be the global second best. By definition:
$$
S_n(k)=\frac{\binom{k}{2}}{\binom{n}{2}}=\frac{k^2-k}{n^2-n}
$$

On the other hand, let $\mathfrak{M}_n(k)$ be the expected payoff when choosing at step $k$ the best candidate to date. Then, $\mathfrak{M}_n(k)$ satisfies the following recurrence
\begin{align*}
 \mathfrak{M}_n(k)&=
    \frac{-b}{k+1}S_n(k+1)+\frac{k}{k+1}\mathfrak{M}_n(k+1),\\
\mathfrak{M}_n(n)&=1
\end{align*}
and it can be seen that
$$
\mathfrak{M}_n(k)=\frac{k (b (k-n)+n-1)}{(n-1) n}.
$$

Just like in the classical Secretary problem we have that
$p^{(n)}_{k}=\frac{1}{k}$, but the difference is that in this situation the payoff function is
$P^{(n)}_{k}(1)=\mathfrak{M}_n(k)$. Consequently, if $E_n(0)$ is the expected payoff using the optimal strategy, then the following dynamic program holds:
\begin{align*}
E_n(k)&= \frac{1}{k+1}
    \max \left\{\mathfrak{M}_n(k+1),E_n(k+1)\right\}
    +\frac{k}{k+1} E_n(k+1),\\
 E_n(n)&=0.
\end{align*}

Thus, reasoning as usual, if $F_n(k)$ is the expected payoff when rejecting the $k$-th candidate and
using $k$ as threshold, we have that
\begin{align*}
F_n(k)&=\frac{1}{k+1} \mathfrak{M}_n(k+1)+\frac{k}{k+1}F_n(k+1),\\
F_n(n)&=0.
\end{align*}

Table \ref{Mulrandom} summarizes the relevant data
 \begin{table}[h]
\caption{Penalty if second best is selected}%
\label{Mulrandom}
\begin{tabular}
[c]{|c|c|c|c|c|c|c|}\hline
$\mu$ & $G_{n}(k)$ & $g_{n}(x)$ & $g(x)$ & $H_{n}(k)$ & $h_{n}(x)$ & $h(x)$ \\\hline
$0$ & $\frac{\mathfrak{M}_{n}(k+1)}{k+1}$ & $\frac{\mathfrak{M}_{n}(\lfloor
nx\rfloor) }{\lfloor nx\rfloor+1}$ & $b (x-1)+1$ & $\frac{1}{k+1}$ & $\frac{1}{\lfloor nx\rfloor+1}$ & $\frac{1}{x }$ \\\hline
\end{tabular}
\end{table}

And the IVP is
\[
\left\{
\begin{array}
[c]{l}%
y'(x)=-b (x-1)+\dfrac{y}{x}-1\\
y(1)=0\text{ \ \ \ \ \ \ \ \ \ \ \ \ \ \ \ }%
\end{array}
\right.
\]
whose solution is
$$
f_b(x)= -b x^2+b x+b x \log (x)-x \log (x)
$$
Condition (3iii) of Theorem \ref{main} holds because
$$
V_n(k)=\frac{-b (3 k+2) n+b (k+1)^2+(b-1) n^2+n}{(k+1) (n-1) n}<\frac{b}{k}
$$
and condition (3iv) holds because:
$$
M_n(k)= f''\left( \frac{k+1}{n}\right)=\frac{b n}{k+1}-2 b-\frac{n}{k+1}<\frac{b n}{k}.
$$

Thus, $F_{n}(\lfloor nx\rfloor)$ converges uniformly to $f(x)$ on
$[0,1]$. Moreover, if $\vartheta_b$ is such that $f_b(\vartheta_b)$ is maximum, by
Proposition \ref{conv}, we have that
\begin{align*}
&  \lim_{n}\frac{\kappa_{n}}{n}=\vartheta_b:= \left\{ \begin{array}{lcc}
           \displaystyle{  e^{-1}} &   if  & b =0 \\
					\\ \displaystyle{{\frac{(1-b)}{2b} W\left( {\frac{b}{1-b} \left(2^{1-b} e^{2 b-1}\right)^{\frac{1}{1-b}}}{ }\right)}{ } } &  if  & 0< b   < 1 \\
             \\  \displaystyle{\frac{1}{2}} &  if & b=1  \\
                          \\ \displaystyle{{\frac{(1-b)}{2b} W_{-1} \left( {\frac{b}{1-b} \left(2^{1-b} e^{2 b-1}\right)^{\frac{1}{1-b}}}{ }\right)}{ } } &  if  &   b  > 1
             \end{array}
   \right.\\
&  \lim_{n}\mathbf{P}_{n}=f_b(\vartheta_b)= \left\{ \begin{array}{lcc}
             e^{-1} &   if  & b =0 \\
             \\ \displaystyle{ \frac{1}{4}} &  if & b=1  \\
             \\ \displaystyle{-\frac{\vartheta_b \left(2 (b-1) \log
   \left(-\frac{\vartheta_b}{2 b}\right)+2
   b+\vartheta_b\right)}{4 b} } &  if  & 0 \neq b   \neq 1
             \end{array}
   \right.
\end{align*}

\begin{ejem}
If $b=2$, using the previous results we have that $lim_{n}\frac{\kappa_{n}}{n}=\vartheta_2=-\frac{1}{4} W_{-1}\left(-\frac{4}{e^3}\right)=0.63741732638\dots$ and $\lim_n \mathbf{P}_n = \vartheta_2  (2-2 \vartheta_2 +  \log (\vartheta_2 ))=0.17518436956\dots$

These results seem accurate since, for $n=10^7$, the following values can be computed directly using the dynamic program:
\begin{align*}
\mathbf{P}_{10^7}&=0.175184397659986\dots,\\
\kappa_{10^7}&=6374173.
\end{align*}
\end{ejem}

\section{Future Perspectives}
Our methodology extends to practically any optimal-stopping problem for which the optimal strategy has a single threshold value. When there are several thresholds, there is an important modification in the theoretical background still undeveloped. On a different note, there are sequences of functions defined by recurrences whose associated functions $f_n(x):=F_n (\lfloor n x\rfloor)$ are not uniformly convergent in the closed interval $[0,1]$ but seem to converge pointwise in the open interval $(0,1)$. We provide some insight on these two issues in what follows.

\subsection{Punctual non-uniform convergence}
Under certain conditions, even though $\{f_n\}$ may not converge uniformly in $[0,1]$, it does converge punctually in $(0,1)$ to a $\mathcal{C}^1$ function $f$ satisfying the differential equation from Theorem \ref{main}. This function $f$ may not extend continuously to $1$ or, even if it does, $f(1)$ may not coincide with the final value $\mu$. We hope to find sufficient conditions guaranteeing this punctual convergence of $\{f_n\}$ in $(0,1)$ to such an $f$, and determining what $f(1)$ must be (or what conditions it must satisfy). In this regards we state the following conjectures:

\begin{con}\label{con:one}
  Let $\{F_n\}, \{G_n\}$ and $\{H_n\}$ be sequences of real functions on $\{1,\ldots, n\}$ satisfying
  \begin{align*}
  F_n(k)&=G_n(k)+H_n(k) F_n(k+1),\\
  F_n(n)&=\mu,
  \end{align*}
  and assume that the functions defined in $[0,1]$ by $g_n(x):=n G_n(\lfloor nx \rfloor)$ and $h_n(x):=n(1-H_n(\lfloor nx \rfloor))$ converge pointwise in $(0,1)$ to continuous functions $g$ and $h$, and that the differential equation
  $$
  y'(x)=-g(x) +h(x) y(x)
  $$
  admits a solution $y(x)$ in $(0,1]$ only for the final condition $y(1)=\Theta$. Then $F_n(\lfloor nx \rfloor)$ converges pointwise in $(0,1)$ to a function $f \in \mathcal{C}^1(0,1]$ satisfying
  \begin{align*}
  f'(x)&=-g(x) +h(x) f(x)\ \mathrm{for}\ x\in(0,1)\\
	f(1)&=\Theta.
  \end{align*}
\end{con}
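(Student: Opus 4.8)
The plan is to bypass the uniform-convergence route of Theorem \ref{main}, which is unavailable here for two reasons: $f$ need not extend continuously to $1$ with value $\mu$, and $g,h$ are only assumed continuous, so that $f\in C^1(0,1]$ but possibly $f\notin C^2$ (the $f''$ estimate of Theorem \ref{main} is then meaningless). Instead I would work directly from the closed form of the recurrence. Telescoping $F_n(k)=G_n(k)+H_n(k)F_n(k+1)$ with $F_n(n)=\mu$ gives, for $0\le k\le n-1$,
\[
F_n(k)=\mu\prod_{j=k}^{n-1}H_n(j)+\sum_{i=k}^{n-1}G_n(i)\prod_{j=k}^{i-1}H_n(j),
\]
with empty products read as $1$. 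The candidate limit is the function produced by variation of constants for $y'=hy-g$ in the form that selects the unique trajectory reaching $x=1$, namely
\[
f(x)=\int_x^1 g(s)\exp\!\left(-\int_x^s h(t)\,dt\right)ds,\qquad \Theta:=\lim_{x\to 1^-}f(x).
\]
A direct differentiation confirms $f'=hf-g$ on $(0,1)$ and $f\in C^1(0,1]$; this $f$ is precisely the solution whose existence on $(0,1]$ the hypothesis singles out, so its terminal value is the prescribed $\Theta$.

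First I would fix an interior $x\in(0,1)$, put $k=\lfloor nx\rfloor$, and match the two pieces of the explicit formula to the two pieces of $f$. For the forcing term, writing $G_n(i)=\tfrac1n g_n(i/n)$ and $\prod_{j=k}^{i-1}H_n(j)=\exp\!\big(\sum_{j=k}^{i-1}\log(1-\tfrac1n h_n(j/n))\big)$, the inner exponent is a Riemann sum for $-\int_x^{i/n}h$ and the outer sum is a Riemann sum for $\int_x^1 g(s)\exp(-\int_x^s h)\,ds=f(x)$; the pointwise convergence of $g_n,h_n$ feeds these approximations, and a first-order mean-value comparison replaces the second-order estimate of Theorem \ref{main}. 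For the terminal term, I would show that the prefactor $\prod_{j=k}^{n-1}H_n(j)\to 0$, so that the initial datum $\mu$ is asymptotically forgotten; the driving mechanism is $\sum_{j=k}^{n-1}(1-H_n(j))=\tfrac1n\sum_{j=k}^{n-1}h_n(j/n)\to+\infty$, the discrete shadow of the divergence of $\int_x^1 h$ that underlies the uniqueness of the extending solution. Assembling the two limits yields $F_n(\lfloor nx\rfloor)\to f(x)$ pointwise on $(0,1)$, with $f(1)=\Theta$ by construction.

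It is worth recording why the bookkeeping of Lemma \ref{imp} and Remark \ref{remk0} cannot simply be quoted. Setting $S_n(k)=F_n(k)-f(k/n)$ reproduces a recurrence of the type treated there, but the boundary term $S_n(n-1)=G_n(n-1)+\mu H_n(n-1)-f((n-1)/n)$ does not tend to $0$ when $\mu\neq\Theta$: since $1-H_n(n-1)=\tfrac1n h_n((n-1)/n)$ need not vanish near the singularity, $\mu$ leaves a residual at the top. Thus a genuine boundary layer near $x=1$, of width shrinking with $n$, must be handled separately, and only after passing below it can a contraction argument of the type in Lemma \ref{imp} be run on $[\varepsilon,1-\delta]$ before letting $\delta\to 0$.

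The hard part will be the two estimates near $s=1$, where $g$ and $h$ are singular and only pointwise convergence of $g_n,h_n$ is available: controlling the tail of the Riemann sum for $f(x)$ on $[1-\delta,1]$, and establishing the quantitative decay $\prod_{j=k}^{n-1}H_n(j)\to 0$ that encodes the forgetting of $\mu$. Bare pointwise convergence of $g_n$ and $h_n$ does not by itself deliver uniform control of these near-endpoint products and sums; some strengthening, such as local uniform convergence, a monotonicity hypothesis enabling a Dini argument, or an integrable dominating bound on $h_n$ compatible with the divergence of $\int_x^1 h$, appears necessary to turn the qualitative uniqueness hypothesis into the quantitative damping the argument requires. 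Identifying the minimal such condition is, I expect, exactly the obstruction that keeps the statement at the level of a conjecture.
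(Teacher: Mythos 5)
There is nothing in the paper to compare your proposal against: the statement is Conjecture \ref{con:one}, which the authors explicitly leave unproven, supporting it only with numerical evidence (Example \ref{exmu} and Figures \ref{fig:mu3}--\ref{fig:mu103}), where pointwise convergence in $(0,1)$ is observed for every $\mu$ while uniform convergence on $[0,1]$ appears to hold only for the single value $\mu=3$ compatible with condition (2) of Theorem \ref{main}. So your write-up must be judged as an attempt on an open problem, and on those terms it is sound in structure but, as you yourself concede, not a proof.

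The pieces you do establish are correct: the telescoped representation $F_n(k)=\mu\prod_{j=k}^{n-1}H_n(j)+\sum_{i=k}^{n-1}G_n(i)\prod_{j=k}^{i-1}H_n(j)$; the variation-of-constants candidate $f(x)=\int_x^1 g(s)\exp\left(-\int_x^s h(t)\,dt\right)ds$; and, most usefully, the diagnosis of why Lemma \ref{imp} and Remark \ref{remk0} cannot simply be quoted --- when $\mu\neq\Theta$ the boundary term $S_n(n-1)$ carries a non-vanishing residual, which is exactly the boundary layer visible in Figures \ref{fig:mu83} and \ref{fig:mu103}. The obstruction you name at the end is also the genuine one, and it is worth stating how much is missing: the conjecture does not even assume $|H_n(k)|\leq 1$, so the damping $\prod_{j=k}^{n-1}H_n(j)\to 0$ that must ``forget'' $\mu$ is not available from the hypotheses; pointwise convergence of $h_n$ in $(0,1)$ gives no control over the discrete sums $\frac1n\sum_{j=k}^{n-1}h_n(j/n)$ near the endpoint; and even the identification $\Theta=\lim_{x\to1^-}f(x)$ presupposes that your integral converges at the singularity, which again does not follow from the stated assumptions but only from the qualitative uniqueness hypothesis, whose translation into a quantitative estimate is precisely the open point. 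In short, you have reduced the conjecture to a pair of endpoint estimates and correctly observed that some strengthening of the hypotheses (local uniformity, monotonicity, a dominating bound on $h_n$) seems needed to obtain them --- which is the same position the authors are in, and a defensible place to stop.
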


\begin{con}\label{con:two}
  With the same notation as in Conjecture \ref{con:one}, and under the same conditions on $g_n$ and $h_n$, let us assume that the following limit exists:
  $$
  \Theta = \lim_{n \rightarrow \infty} \lim_{k \rightarrow\infty} F_n(n-k).
  $$
  Then $F_n(\lfloor nx \rfloor)$ converges pointwise in $(0,1)$ to a function $f \in \mathcal{C}^1(0,1]$ satisfying
  \begin{align*}
	f'(x)&=-g(x) +h(x) f(x),\\
	f(1)&=\Theta.
  \end{align*}
\end{con}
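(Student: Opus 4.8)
The plan is to exploit the closed form of the backward recurrence together with a boundary-layer analysis near $x=1$, reducing the statement to an application of the techniques already developed for Theorem \ref{main} on every compact subinterval of $(0,1)$. Solving the recurrence from $F_n(n)=\mu$ gives, for $1\le k\le n$,
\[
F_n(k)=\mu\prod_{i=k}^{n-1}H_n(i)+\sum_{j=k}^{n-1}G_n(j)\prod_{i=k}^{j-1}H_n(i),
\]
and the key structural fact is the \emph{semigroup} identity: for any intermediate index $m$ with $k\le m\le n$,
\[
F_n(k)=F_n(m)\prod_{i=k}^{m-1}H_n(i)+\sum_{j=k}^{m-1}G_n(j)\prod_{i=k}^{j-1}H_n(i).
\]
Since the recurrence propagates information from the right endpoint leftward, this splits the value at a bulk point $k=\lfloor nx\rfloor$ into a term carried from an intermediate point $m=\lfloor nx_1\rfloor$ (with $x<x_1<1$) and a purely bulk sum over $[k,m)$, on which $g$ and $h$ are continuous and \emph{non-singular}.

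First I would fix $x\in(0,1)$ and a cutoff $x_1\in(x,1)$ and work on the bulk interval $[x,x_1]$. There the hypotheses $g_n\to g$, $h_n\to h$ let me run exactly the Riemann-sum and Taylor estimates of Lemma \ref{imp} and Theorem \ref{main}: the partial products converge to $\exp\bigl(-\int_{k/n}^{j/n}h\bigr)$ and the bulk sum to $\int_x^{x_1}g(t)\exp\bigl(-\int_x^t h\bigr)\,dt$, uniformly for $x$ in compact subsets of $(0,x_1)$. Hence, for each fixed $x_1$, $F_n(\lfloor nx\rfloor)$ converges to the solution on $[x,x_1]$ of $y'=hy-g$ carrying the terminal value $\lim_n F_n(\lfloor nx_1\rfloor)$ at $x_1$; letting $x_1\uparrow 1$ and using continuous dependence of the non-singular ODE on its data reconstructs a solution $f$ on $(0,1)$.

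The whole content of the statement is thus concentrated in identifying the terminal value inherited by this ODE with $\Theta$, and this is where the hypothesis enters. Reading the iterated limit as the outer edge of a boundary layer, i.e. setting $\ell_K=\lim_n F_n(n-K)$ for each fixed lag $K$ and $\Theta=\lim_K\ell_K$, the semigroup identity with $m=n-K$ shows that the original endpoint value $\mu=F_n(n)$ influences the bulk only through the transfer factor $\prod_{i=k}^{n-1}H_n(i)$, whose asymptotics are governed by the (possibly non-integrable) singularity of $h$ at $1$; the boundary layer ``renormalizes'' $\mu$ into the profile $\ell_K$, and its matching value $\lim_K\ell_K$ is exactly $\Theta$.

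The hard part will be showing that the terminal value of the bulk ODE is precisely $\Theta$. For fixed $x_1<1$ the bulk analysis yields $\lim_n F_n(\lfloor nx_1\rfloor)=f(x_1)$, so the claim $f(1)=\Theta$ amounts to the matched-asymptotics identity $\lim_{x_1\to1}\lim_n F_n(\lfloor nx_1\rfloor)=\lim_K\lim_n F_n(n-K)=\Theta$, reconciling the bulk limit (fix $x_1$, then $n\to\infty$) with the boundary-layer profile (fix lag $K$, then $n\to\infty$). Justifying this interchange is a Moore--Osgood type problem and demands uniform control of $F_n$ throughout the layer near $x=1$, precisely where $h$ is singular and the discrete recurrence departs from its Euler approximation. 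I expect this to be the genuine obstacle; the cleanest route is probably to prove that the double limit $\Theta$ of Conjecture \ref{con:two} coincides with the terminal value singled out by the solvability requirement of Conjecture \ref{con:one}, thereby deducing the former from (a proof of) the latter.
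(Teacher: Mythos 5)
You should first be aware that the statement you are attacking is presented in the paper as an open \emph{conjecture}: the authors offer no proof, only numerical evidence, so there is no argument of theirs to compare against, and the bar is whether your proposal stands on its own as a complete proof. It does not, and you say so yourself. The bulk part of your plan (closed form of the recurrence, the semigroup identity, Riemann-sum and Taylor estimates on a compact interval $[x,x_1]\subset(0,1)$ in the spirit of Lemma~\ref{imp} and Theorem~\ref{main}) yields only a conditional statement: \emph{if} $\lim_n F_n(\lfloor nx_1\rfloor)$ exists, then limits exist on $(0,x_1]$ and solve $y'=hy-g$ with that terminal datum. (Even this needs more than the hypotheses grant: the conjecture assumes only \emph{pointwise} convergence of $g_n,h_n$ in $(0,1)$, while your partial-product and Riemann-sum limits require uniform convergence on compacts or some comparable domination.) The hypotheses hand you limits only along the boundary layer, $\ell_K=\lim_n F_n(n-K)$ at fixed lag $K$; to connect these to a bulk point you must control the recurrence across the window between $\lfloor nx_1\rfloor$ and $n-K$, which collapses onto $x=1$, and identifying $f(1)$ with $\Theta=\lim_K\ell_K$ is exactly the Moore--Osgood interchange that you flag and do not prove. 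That interchange \emph{is} the conjecture; a proposal that defers it contains no proof, only a restatement of the difficulty in matched-asymptotics language.

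Two further concrete problems. First, your heuristic locates the layer in the transfer factor $\prod_i H_n(i)$ and a ``possibly non-integrable singularity of $h$ at $1$''; but in the example the paper itself uses to illustrate this very conjecture, namely $G_n(k)=(k/n)^n+1/(k+n)$ and $H_n(k)=k/(k+1)$, one has $h(x)=1/x$, perfectly regular at $x=1$, and $\prod_{i=k}^{n-1}H_n(i)=k/n$. There the layer is produced entirely by the inhomogeneous term: the contributions $((n-j)/n)^n\approx e^{-j}$ near the right edge sum to $1/(e-1)$, are invisible to the pointwise limit $g(x)=1/(x+1)$, and shift $\Theta$ to $\mu+1/(e-1)$. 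Any correct matching argument must handle this mechanism, which your outline does not contemplate. Second, the fallback reduction to Conjecture~\ref{con:one} cannot work as stated: Conjecture~\ref{con:two} does not assume the unique-solvability hypothesis of Conjecture~\ref{con:one}, and in that same example the equation $y'=y/x-1/(x+1)$ admits a solution on $(0,1]$ for \emph{every} terminal value $y(1)$, so Conjecture~\ref{con:one} is silent precisely where Conjecture~\ref{con:two} is needed --- besides being itself unproven in the paper. In short, your proposal correctly isolates where the difficulty lives but supplies no idea for overcoming it.
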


We provide two examples to illustrate these conjectures and to show that they seem plausible.

\begin{ejem}\label{exmu}
  Consider the following sequences:
  \begin{align*}
  F_n(k)&=G_n(k)+H_n(k) F_n(k+1),\\
	F_n(n)&=\mu,
  \end{align*}
  where
  $$
  G_n(k):=\frac{k}{n^2}+\frac{2 (k+2 n)}{n (-3 k+3 n+2)},
  $$
  and
  $$
  H_n(k):= \frac{3 n-3 k}{-3 k+3 n+2}.
  $$
  We have $G_n(n-1)+\mu H_n(n-1)=\frac{3 (\mu +2)}{5}$, so that condition (2) of Theorem \ref{main} holds if and only if $\mu=3$. All the other conditions hold irrespective of $\mu$. Consider the corresponding differential equation (obtained using our methodology):
  $$
  y^{\prime}(x)=\frac{2 y(x)}{3-3 x}-\frac{-3 x^2+5 x+4}{3-3 x}.
  $$
It has a single solution in $(0,1]$ with final condition $y(1)=3$, namely:
$$
y(x)=\frac{1}{40} \left(-15 x^2+22 x+113\right)
$$
We plot in Figures \ref{fig:mu3}, \ref{fig:mu83} and \ref{fig:mu103} the functions $F_n(\lfloor n x\rfloor )$  whith $ \mu \in\{3,8/3,10/3\}$ for several values of $n$, to illustrate the likely uniform convergence in the first case, and the non-uniformity in the other two. The punctual convergence to $g(x)$ in $(0,1)$ holds regardless of the value of $\mu$.

\begin{figure}[h]
 \includegraphics[width=3in]{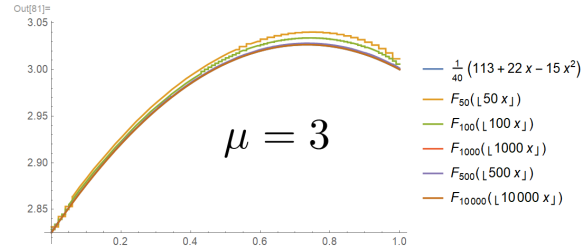}
    \caption{Likely uniform convergence in [0,1] for $\mu=3$ in Example \ref{exmu}.}
		\label{fig:mu3}
\end{figure}

\begin{figure}[h]
\includegraphics[width= 3in]{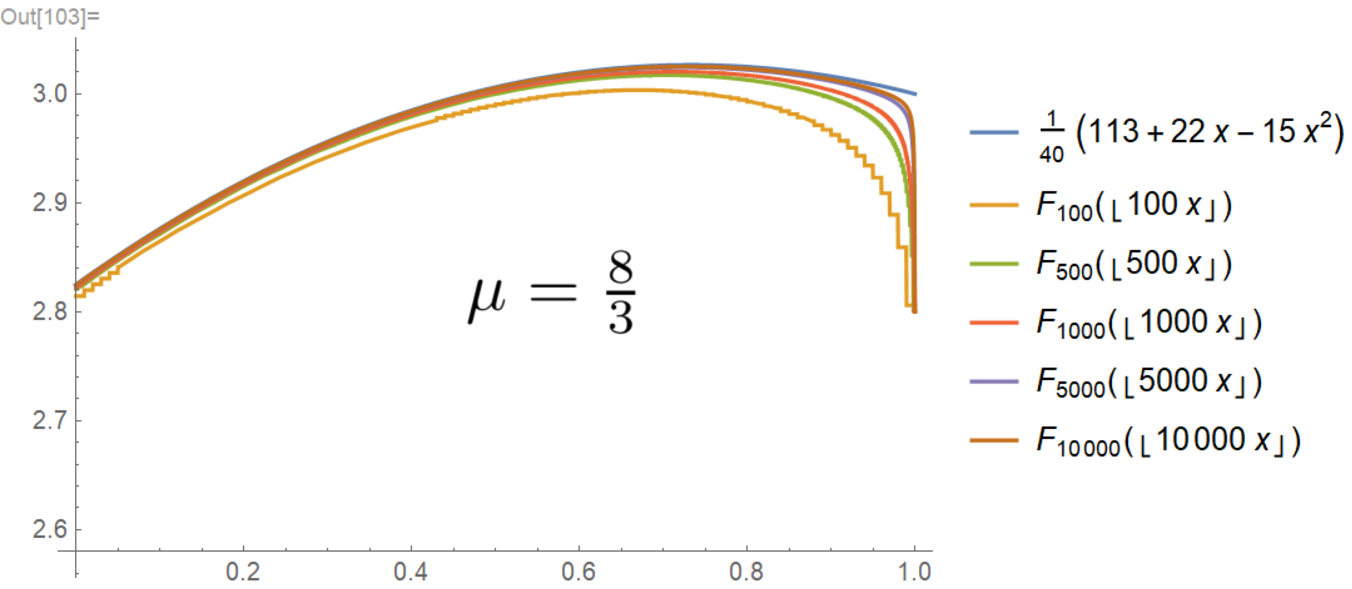}
    \caption{Pointwise, but not uniform, convergence in [0,1) for $\mu=8/3$ in Example \ref{exmu}.}
		\label{fig:mu83}
\end{figure}

\begin{figure}[h]
\includegraphics[width= 3in]{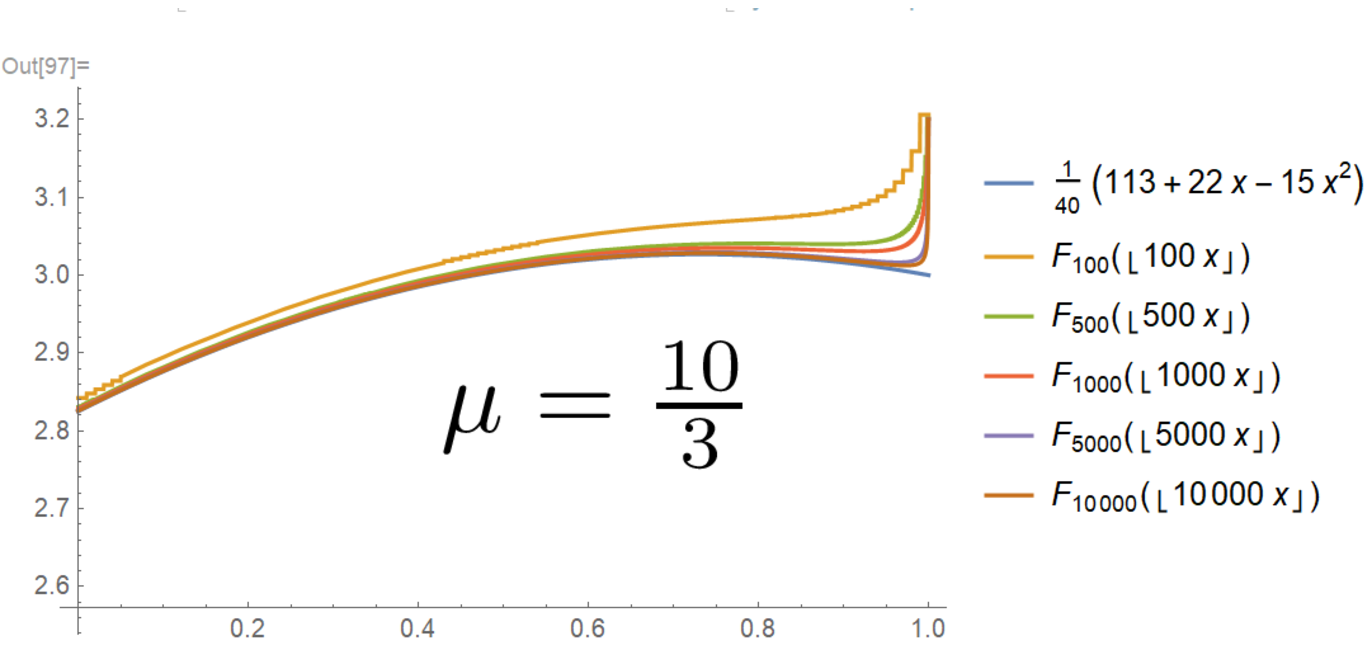}
    \caption{Pointwise, but not uniform, convergence in [0,1) for $\mu=10/3$ in Example \ref{exmu}.}
		\label{fig:mu103}
\end{figure}
\end{ejem}

\begin{ejem}
  Let now
  \begin{align*}
  F_{n}(k)&=G_{n}(k)+H_{n}(k)F_{n}(k+1),\\
	F_{n}(n)&=\mu
  \end{align*}
  where
  $$
  G_n(k)=\left(\frac{k}{n}\right)^n+\frac{1}{k+n}; \textrm{   }H_n(k)=\frac{k}{k+1}.
  $$
In this case,
$$
G_n(n-1)+\mu H_n(n-1)= \left(\frac{n-1}{n}\right)^n+\frac{1}{2 n-1}-\frac{\mu
}{n}+\mu
$$
so that $\lim_n G_n(n-1)+\mu H_n(n-1)=\mu+e^{-1} \neq \mu$ and condition (2) in Theorem \ref{main} does not hold for any value of $\mu$.

Let us check Conjecture \ref{con:two}. First, note that the following limit exists
$$
\Theta:= \lim_{n \rightarrow \infty} \lim_{k \rightarrow\infty} F_n(n-k)= \frac{1}{e-1}+\mu.
$$
The solution of the differential IVP:
$$
y'(x)=- \frac{1}{x+1}+ \frac{y(x)}{x};\quad y(1)=\Theta
$$
is:
$$
f(x)=x \left(\frac{\log (x)-e \log (2 x)+(e-1) \mu +1+\log    (2)}{e-1}+\log (x+1)\right).
$$

In Figure \ref{fig:pato2} one can perceive the expected punctual convergence to $f(x)$ in $(0,1)$, as conjectured.
\begin{figure}[h]
  \begin{center}\includegraphics[width= 5in,height= 2.0in]{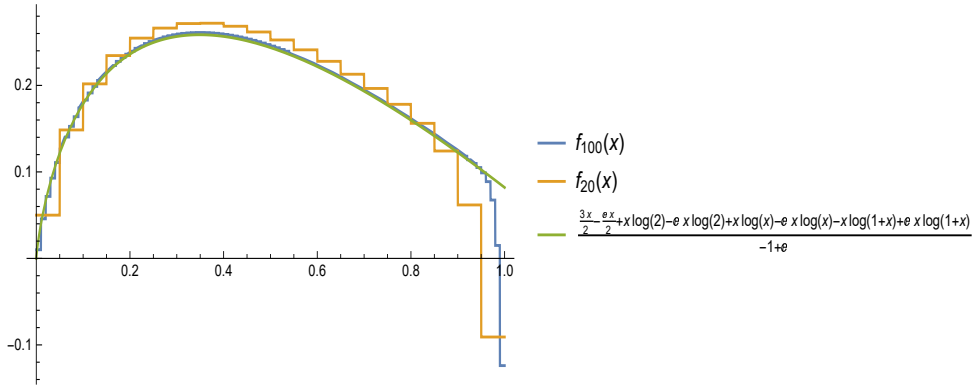}
    \caption{ $f_{n}(x)=F_n(\flr{nx}$ for  $n \in \{20,100\}$ and its limit
      $f(x)$ for $\mu=-1/2$.}\label{fig:pato2}
  \end{center}
\end{figure}

Let $\vartheta=0.34873760521\dots$ be the value at which $f(x)$ reaches its maximum in $[0,1]$. Notice the following approximations:
$$
\arg \max \{ F_{10^5}(k): 0<k<10^{5}\}=34873\approx 10^5\cdot \vartheta=34873.760521\dots
$$
and
$$
\max \{ F_{10^5}(k): 0<k<10^{5}\}=0.25856851103\ldots  \approx f(\vartheta) = 0.25856593889\dots
$$
\end{ejem}

\subsection{Piecewise functions. Gusein-Zade's generalized version of the secretary problem}

There are cases in which the optimal strategy has two (or more)
thresholds. In these cases Theorem \ref{main} and Proposition \ref{nuevaa}
can only provide the asymptotic value of the last one. The adaptation
of both results to this case is not straightforward but the idea looks promising. The following result, resembling Theorem \ref{inicial}, holds in any case.

\begin{prop}
  \label{medio} Let $\{\mathbf{s}_{n}\}_{n\in\mathbb{N}}$ with
  $\mathbf{s}_{n} \in\{0,...,n\}$ be such that
  $\lim_{n\rightarrow\infty}\frac{\mathbf{s}_{n}}{n}= \mathbf{s}$ are
  the real sequences of functions
  $\{F_{n}\}_{n\in \mathbb{N}}$, $\{G_{n}^{1}\}_{n\in\mathbb{N}}$,
  $\{G_{n}^{2}\}_{n\in \mathbb{N}}$, $\{H_{n}^{1}\}_{n\in\mathbb{N}}$
  and let $\{H_{n}^{2}\}_{n\in \mathbb{N}}$ defined in $\{0,...,n\}$,
  satisfy:

\[%
\begin{array}
  [c]{ccl}
  F_{n}(k) & = & G_{n}^{1}(k)+H_{n}^{1}(k)F_{n}(k+1) \text{ if }
                 k<\mathbf{s}%
_{n} \vspace{1ex}\\
  F_{n}(k) & = & G_{n}^{2}(k)+H_{n}^{2}(k)F_{n}(k+1) \text{ if }
                 \mathbf{s}_{n}
                 \leq k<n \vspace{1ex}\\
F_{n}(n) & = & \mu.
\end{array}
\]

Given $x\in\mathbb{R}$, define
\[
f_{n}(x):=F_{n}(\lfloor{nx}\rfloor)
\]
\[
h_{n}^{i}(x):=n(1-H_{n}^{i}(\lfloor{nx}\rfloor))
\]
\[
g_{n}^{i}(x):=nG_{n}^{i}(\lfloor{nx}\rfloor).
\]

If the following conditions hold:
\begin{itemize}
\item[i)] The sequences $\{h_{n}^{1}\}$ y $\{g_{n}^{1}\}$ converge
  punctually puntualmente in $(0,\mathbf{s}]$ and uniformly in
  $[\varepsilon,\mathbf{s}]$ for any $0<\varepsilon<\mathbf{s}$ to
  the continuous functions $h^{1}(x)$ y $g^{1}(x)$, respectively.
\item[ii)] The sequences $\{h_{n}^{2}\}$ y $\{g_{n}^{2}\}$ converge
  punctually in $[\mathbf{s},1)$ and uniformly in
  $[\mathbf{s},\varepsilon]$ for any $\mathbf{s}<\varepsilon<1$ to
  the continuous functions $h^{2}(x)$ y $g^{2}(x)$, respectively.
\item[iii)] The sequence $\{f_{n}\}$ converges uniformly in $[0,1]$ to
  a continuous function $f$.
\end{itemize}

Then: $f(1)=\mu$, and $f$ is the solution, in $[\mathbf{s},1]$ of the initial value problem:
\[
y'(x)-g^{2}(x); y(1)=\mu
\]
and, $f$ is also the solution in $(0,\mathbf{s}]$ of the IVP
\[
y'(x)= y(x) g^{1}(x) -g^{1}(x),\, y(\mathbf{s})=f(\textbf{s}).
\]
\end{prop}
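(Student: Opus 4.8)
The plan is to read this as the two--piece counterpart of Theorem \ref{inicial}: above the threshold only the superscript-$2$ recurrence is active and below it only the superscript-$1$ one, so on each of the open intervals $(\mathbf{s},1)$ and $(0,\mathbf{s})$ I can run the very telescoping-to-integral argument that underlies Theorem \ref{inicial}, and then glue the two resulting solutions together using the continuity of $f$. The structural fact that makes the split legitimate is $\lim_n \mathbf{s}_n/n=\mathbf{s}$, which guarantees that for any compact interval strictly inside one piece a \emph{single} recurrence governs the whole range of indices once $n$ is large.

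First I would record the two boundary facts. Since $f_n(1)=F_n(n)=\mu$ for every $n$ and $f_n\to f$ uniformly by hypothesis (iii), passing to the limit at $x=1$ gives $f(1)=\mu$; moreover $f\in C[0,1]$, so $f(\mathbf{s})$ is a well-defined number. Next, for the upper piece I would fix $x_0<x_1$ in $(\mathbf{s},1)$. Because $\mathbf{s}_n/n\to\mathbf{s}<x_0$ and $x_1<1$, for all large $n$ every index $k\in\{\lfloor nx_0\rfloor,\dots,\lfloor nx_1\rfloor-1\}$ satisfies $\mathbf{s}_n\le k<n$, so $F_n(k)=G_n^2(k)+H_n^2(k)F_n(k+1)$ holds throughout. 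Rewriting this identity as $n(F_n(k)-F_n(k+1))=g_n^2(k/n)-h_n^2(k/n)F_n(k+1)$ and telescoping gives
\[
f_n(x_0)-f_n(x_1)=\frac1n\sum_{k=\lfloor nx_0\rfloor}^{\lfloor nx_1\rfloor-1}\Bigl(g_n^2(k/n)-h_n^2(k/n)\,F_n(k+1)\Bigr).
\]
On $[x_0,x_1]\subset(\mathbf{s},1)$ the data $g_n^2,h_n^2$ converge uniformly (hypothesis (ii)) and $f_n\to f$ uniformly, so the right-hand side is a Riemann sum converging to $\int_{x_0}^{x_1}\bigl(g^2(t)-h^2(t)f(t)\bigr)\,dt$; hence $f(x_0)-f(x_1)=\int_{x_0}^{x_1}(g^2-h^2 f)$. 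As the integrand is continuous on $(\mathbf{s},1)$, the fundamental theorem of calculus yields $f'(x)=h^2(x)f(x)-g^2(x)$ there, and together with $f(1)=\mu$ this identifies $f$ on $[\mathbf{s},1]$ with the solution of the stated upper initial value problem.

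The lower piece I would treat verbatim, now with the superscript-$1$ recurrence: for $x_0<x_1$ in $(0,\mathbf{s})$ one has $k<\mathbf{s}_n$ for all large $n$ (again by $\mathbf{s}_n/n\to\mathbf{s}$), so the same telescoping--Riemann-sum passage gives $f'(x)=h^1(x)f(x)-g^1(x)$ on $(0,\mathbf{s})$. The initial value at $\mathbf{s}$ is forced to be the number $f(\mathbf{s})$ already computed from the upper piece, by the continuity of $f$ established above; this gluing is exactly the content of the condition $y(\mathbf{s})=f(\mathbf{s})$ in the statement.

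I expect the only real delicacy to lie at the three distinguished abscissae $0,\mathbf{s},1$, where the hypotheses grant $g_n^i,h_n^i$ merely pointwise convergence. The device that neutralises this is to state every identity between \emph{interior} endpoints $x_0,x_1$ and to invoke uniform convergence only on the compact subintervals supplied by hypotheses (i)--(ii), so that the Riemann sums are never extended up to the bad points; the threshold itself enters solely through $\lim_n\mathbf{s}_n/n=\mathbf{s}$, used to certify that one recurrence alone controls each summation range.
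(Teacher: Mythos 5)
Your proof is correct and is essentially the argument the paper intends: the paper gives no details here, deferring to the proof of Theorem~\ref{inicial} in \cite{yo}, and your telescoping/Riemann-sum passage on compact subintervals of each piece --- using $\mathbf{s}_n/n\to\mathbf{s}$ to guarantee that a single recurrence governs each summation range, then gluing at $\mathbf{s}$ by continuity of $f$ --- is exactly that argument run piecewise. You also silently repair the statement's garbled IVPs to $y'=h^i y-g^i$, which is clearly the intended reading given Theorem~\ref{inicial}.
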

The proof of this result is identical to the one in \cite{yo}, but it presents the exact same weakness. Namely, the required assumption of the uniform convergence of $f_n$. Our approach is to find conditions analogue to those of this paper (cf. Theorem \ref{main}) eliminating that requirement.

In what follows, we assume that such a result exists in order to explain how the secretary problem in which success is reached upon choosing either the best or the second-best candidate would be studied (asymptotically). This variant has already been studied by Gilbert and Mosteller \cite{gil}, and by Gusein-Zade \cite{gus}. The following result gives the optimal strategy.

\begin{prop}\label{dosum}
  For any $n\in \mathbb{N}$ there are $r_n, s_n  \in [0,n]$ such that the following strategy is optimal
  \begin{enumerate}
  \item Do not choose any candidate up to interview $r_n$.
  \item From interview $r_n$ to $s_n$ (inclusive),
    choose the first candidate which is better than the previous ones.
  \item After interview $s_n$, choose the first candidate
    which is at least the second-best among the already interviewed.
  \end{enumerate}
\end{prop}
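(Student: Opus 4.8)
The plan is to phrase the problem as a dynamic program indexed by the \emph{relative rank} of the candidate under examination, and then to reduce the whole claim to a handful of monotonicity statements. At step $k$, only a candidate whose rank among the first $k$ interviewed is $1$ or $2$ can possibly turn out to be the global best or the global second best, so these are the only candidates one would ever stop on. A direct combinatorial computation gives the two stopping rewards: for a relative-rank-$1$ candidate at step $k$ the probability of eventual success is $a_k=\frac{k}{n}+\frac{k(n-k)}{n(n-1)}=\frac{k(2n-1-k)}{n(n-1)}$, while for a relative-rank-$2$ candidate (which can no longer be the global best) it is $b_k=\frac{k(k-1)}{n(n-1)}$. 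Writing $W_k$ for the optimal expected payoff after rejecting candidate $k$, reaching step $k+1$ yields value $\max(a_{k+1},W_{k+1})$, $\max(b_{k+1},W_{k+1})$ or $W_{k+1}$ according as the relative rank of candidate $k+1$ is $1$, $2$ or $\ge 3$; averaging over its uniform relative rank gives a recurrence for $W_k$ with $W_n=0$, and a rank-$\ge 3$ candidate is never accepted since its reward is $0\le W_k$.

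Next I would record four elementary facts. First, $a_k\ge b_k$ for all $k$, since $a_k-b_k=\frac{2k(n-k)}{n(n-1)}\ge 0$. Second, $b_k$ is strictly increasing in $k$. Third, and this is the point that is easiest to get wrong, $a_k$ is non-decreasing over the \emph{integer} range $\{1,\dots,n\}$: although $a_k$ is a downward parabola in a real variable $k$, its vertex sits at the non-integer point $k=n-\tfrac12$, so at integer arguments one has $a_1<\dots<a_{n-1}=a_n$. Fourth, the continuation value is non-increasing, $W_k\ge W_{k+1}$. This last fact I would obtain by a domination (policy-pasting) argument rather than by grinding the recurrence: from the state ``candidate $k$ rejected'', one feasible policy is to reject candidate $k+1$ unconditionally and then play optimally, and this policy realizes exactly the value $W_{k+1}$; since $W_k$ is the optimum over all policies, $W_k\ge W_{k+1}$.

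With these in hand the conclusion is immediate. Because $a_k$ is non-decreasing and $W_k$ non-increasing, the difference $a_k-W_k$ is non-decreasing, so the rank-$1$ stopping set $\{k:a_k\ge W_k\}$ is an up-interval $\{k\ge r_n\}$; the identical argument applied to $b_k$ shows the rank-$2$ stopping set $\{k:b_k\ge W_k\}$ is an up-interval $\{k\ge s_n\}$. Finally $a_k\ge b_k$ forces the second set to be contained in the first, whence $r_n\le s_n$. This produces exactly the three announced regimes: no candidate is accepted before interview $r_n$; between $r_n$ and $s_n$ one stops only on a relative-rank-$1$ candidate (the first better than all previous ones); and from $s_n$ on one stops on any relative-rank-$1$ or $-2$ candidate, i.e. on the first candidate that is at least second best so far.

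I expect the substantive step to be the monotonicity $W_k\ge W_{k+1}$. The reward $a_k$ is not monotone as a function of a real variable, so one must argue carefully on the integer lattice for the third fact; and the cleanest route to the fourth is the structural domination argument above rather than an induction on the explicit recurrence, which would entangle the two incomparable rewards $a_{k+1}$ and $b_{k+1}$ inside nested maxima. Everything else reduces to one-line computations.
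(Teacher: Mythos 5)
Your proof is correct, but there is nothing in the paper to compare it against: Proposition \ref{dosum} is stated without proof, imported as a known result from Gilbert and Mosteller \cite{gil} and Gusein-Zade \cite{gus}, and Section 5 then simply uses it to run the asymptotic analysis. What you have written is therefore a self-contained substitute for a citation, and it is sound. The reduction to the state (step, relative rank), the stopping rewards $a_k=\frac{k(2n-1-k)}{n(n-1)}$ and $b_k=\frac{k(k-1)}{n(n-1)}$ (which are exactly the paper's $M_n(k)$ and $S_n(k)$), the domination $a_k\ge b_k$, the integer-lattice monotonicity $a_1<\dots<a_{n-1}=a_n$ (your caution about the parabola vertex at $n-\tfrac12$ is exactly the right point to be careful about), and the monotonicity $W_k\ge W_{k+1}$ together make both stopping sets up-intervals with the rank-$2$ set contained in the rank-$1$ set, which is precisely the three-regime strategy, up to the harmless convention shift that your $s_n$ is the paper's $s_n+1$. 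One simplification worth noting: the step you single out as substantive, $W_k\ge W_{k+1}$, needs neither the policy-pasting argument nor any induction; it falls out of the recurrence in one line, since each of the three terms in
\[
W_k=\tfrac{1}{k+1}\max(a_{k+1},W_{k+1})+\tfrac{1}{k+1}\max(b_{k+1},W_{k+1})+\tfrac{k-1}{k+1}W_{k+1}
\]
is bounded below by the corresponding multiple of $W_{k+1}$, so the feared entanglement of the incomparable rewards $a_{k+1}$ and $b_{k+1}$ never arises. (Your pasting argument remains valid and is more robust, since it works verbatim for any nonnegative reward structure; it is also the same style of monotonicity argument the paper does spell out for the single-threshold wildcard variant in Proposition \ref{ju}.) In short: the paper buys economy by citing the classical literature, while your argument buys self-containedness and a monotonicity scheme that extends directly to other multi-threshold variants of the kind the paper proposes to study.
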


Let $S_n(k)$ be the success probability when choosing the candidate in the $k$-th interview, assuming it is the second-best among the interviewed ones. Certainly,
$$
S_n(k)=\frac{\binom{k}{2}}{\binom{n}{2}}.
$$
Let $M_n(k)$ be the success probability when choosing the candidate in the $k$-th interview, assuming it is the best among the interviewed ones. The following recurrence holds:
$$
M_n(k)=\frac{1}{k+1} S_n(k+1)+\frac{k}{k+1}M_n(k+1); M_n(n)=1.
$$
From the above follows that
$$
M_n(k)=\frac{k^2-2 k n+k}{n-n^2}.
$$

Let $\overline{F}_n(k)$ be the success probability after rejecting the
candidate in the $k$-th interview, and waiting to choose the first
which is at least second-best among the already interviewed. We have:
$$
\overline{F}_n(k)= \frac{2}{n}+
\frac{k-1}{k+1}\overline{F}_{n}(k+1);\overline{F}_n(n)=0 \Longrightarrow
\overline{F}_n(k)=-\frac{2 k (k-n)}{(n-1) n}
$$

\subsubsection{Computing $\lim_n\frac{s_n}{n}$}
This can truly be done using the results of this paper. Notice that
the optimal threshold $s_n$ is the last value of $k$ for
which rejecting a second-best candidate (among the
interviewed ones) is preferable to choosing her. That is, $s_n$ satifies that
$$
S(s_n)
= \frac{s_n^2-s_n}{(n-1) n}<F_n(s_n)
$$
and
$$
F_n(s_n+i) \leq S(s_n+i)
=\frac{(s_n+i)^2-s_n-i}{(n-1) n}.
$$

We know from the formula for $\overline{F}_n(k)$ that $\overline{f}_n(x):=\overline{F}(\lfloor n x\rfloor)$
converges uniformly in $[0,1]$ to $\overline{f}(x):=2 (x - x^2)$. In addition, it is trivial to verify that
$S(\lfloor n x\rfloor)$ converges in $[0,1]$ to
$s(x)=x^2$. Hence, by Proposition \ref{nuevaa},
$\lim \frac{s_n}{n}=\frac{2}{3}$, which is the largest
solution of the equation $s(x)=\overline{f}(x)$ in $[0,1]$.

\subsubsection{Computing $\lim_n\frac{r_n}{n}$ and the asymptotic probability of success.}
Since $s_n$ is the second optimal threshold (Proposition \ref{dosum}), we denote by $F_n(k)$ the probability of success when rejecting the $k$-th candidate and waiting to:
\begin{enumerate}
\item Choose the first one which is the best among the interviewed
  ones if this happens before the $s_n$-th interview, or
\item Choose the one which is at least second-best if this happens
  after the $s_n$-th interview.
\end{enumerate}
Equivalently, $F_n(k)$ represents the probability of success when using the first threshold, if $k\leq s_n$, and if $k>s_n$, then $F_n(k)$ is the probability of success when rejecting the $k$-th candidate, waiting to choose one which is at least second-best afterwards (i.e. $F_n(k)=\overline{F}_n(k)$ for $k>r_n$). In other words,
$$
F_{n}(k)=
\left\{
  \begin{array}{ccccc}
                   \frac{2}{n}+ \frac{k-1}{k+1}F_{n}(k+1) &
                                                            \text{ if}
                   & k<s_{n} \vspace{1ex}\\
                   \frac{ M_n(k+1)}{k+1}+ \frac{k}{k+1}F_{n}(k+1) &\text{ if } &
                                                                                 s_{n} \leq k<n \vspace{1ex} \\
                   0 & \text{ if }& k=n
  \end{array}
\right.
$$

Now, either assuming the uniform convergence in $[0,1]$ of $F_n(\lfloor n x\rfloor )$ to $f(x)$, or assuming some kind of generalization of Theorem \ref{main}, one would reason as follows.
Consider the initial value problem
$$ y'(x)=\frac{y(x)}{x}+x-2,\quad
y\left(\frac{2}{3}\right)
=
\overline{f}\left(\frac{2}{3}\right)
=
\frac{4}{9}
$$
whose solution is
$$
f(x)= x^2-2 x \log (x)-2 x \log \left(\frac{3}{2}\right).
$$
This, together with the previous computation for $[4/9,1]$ gives:
$$
f(x)=
\left
  \{
  \begin{array}{lcc}
    x^2-2 x \log (x)-2 x \log \left(\frac{3}{2}\right) &
                                                         \text{ if }&                                                                     0\leq x \leq \frac{2}{3} \\
    -2 \left(x^2-x\right)  &  \text{ if }
                                                                    & \frac{2}{3} < x \leq 1 \\
  \end{array}
\right.
$$

The maximum of $f(x)$ in $[0,1]$ is reached at $\vartheta=-W\left(-\frac{2}{3 e}\right)=0.34698160970757\ldots$, so that
$$
\lim \frac{r_n}{n}=\vartheta=
-W\left(-\frac{2}{3 e}\right)=0.34698160970757\ldots,
$$
and
$$
\lim \mathbf{P}_n=f(\vartheta)=\vartheta (2 -  \vartheta )=
0.57356698193989\ldots$$
An these values coincide with the solutions from \cite{gil} and \cite{gus}.

\end{document}